\newtheorem{thm}{Theorem}[section]
\newtheorem{theorem}[thm]{Theorem}
\newtheorem{prop}[thm]{Proposition}
\newtheorem{proposition}[thm]{Proposition}
\newtheorem{lemma}[thm]{Lemma}
\newtheorem{cor}[thm]{Corollary}
\newtheorem{corollary}[thm]{Corollary}
\newtheorem{remark}[thm]{Remark}
\newtheorem{ex}[thm]{Example}
\newtheorem{definitiontemp}[thm]{Definition}
\newenvironment{defn}{\begin{definitiontemp}
\normalfont}{\end{definitiontemp}}
\newenvironment{pf}{\begin{trivlist}\item[\hskip\labelsep
{\it Proof.}]}{\end{trivlist}}
\newenvironment{pftitle}[1]{\begin{trivlist}\item[\hskip\labelsep
{\it #1.}]}{\end{trivlist}}
\newcommand\commutativesquare[8]{
\xymatrix{#1\ar[d]_{#4}\ar[r]^{#2}& #3\ar[d]^{#5}\\
#6\ar[r]_{#7}&#8}
}
\newcommand{\Ftilde}{\tilde{F}}
\newcommand{\Q}{\mathbb{Q}}
\newcommand{\A}{\mathcal{A}}
\newcommand{\B}{\mathcal{B}}
\newcommand{\C}{\mathcal{C}}
\newcommand{\M}{\mathcal{M}}
\def\res{\!\!\upharpoonright\!}
\def\phi{\varphi}
\newcommand{\la}{\langle}
\newcommand{\ra}{\rangle}
\newcommand{\at}{\char'100}
\newcommand{\ktilde}{\tilde{k}}
\newcommand{\gtilde}{\tilde{g}}
\newcommand{\xtilde}{\tilde{x}}
\newcommand{\ytilde}{\tilde{y}}
\newcommand{\set}[2]{\ensuremath{ \{ #1 : #2 \} }}
\newcommand{\DS}[2]{\ensuremath{\text{DgSp}_{\mathcal{#1}}(#2)}}
\newcommand{\bfd}{\boldsymbol{d}}
\newcommand{\bfz}{\boldsymbol{0}}
\begin{document}

\title{Computably Categorical Fields via Fermat's Last Theorem}
\author{
Russell Miller\thanks{The first author was
partially supported by NSF grant 
\#DMS - 1001306, by several grants from
The City University of New York PSC-CUNY Research Award Program
and the Queens College Research Enhancement Fund,
and by grant \#13397 from the Templeton Foundation.
Both authors were supported by CUNY Collaborative
Grant \# 80209-04-12, and wish to acknowledge
useful conversations with Gunther Cornelissen,
Oleg Kudinov, Alexandra Shlapentokh, and Lucien Szpiro.
} \&
Hans Schoutens\thanks{The second author was
partially supported by NSF grant \# DMS-0500835,
and by grants numbered 67195-00 36, 68093-00 37,
69135-00 38, and 61085-00 39 from The City University
of New York PSC-CUNY Research Award Program.}}
\maketitle

\begin{abstract}
We construct a computable,
computably categorical field of infinite transcendence
degree over $\Q$, using the Fermat polynomials and
assorted results from algebraic geometry.
We also show that this field has an intrinsically
computable (infinite) transcendence basis.
\end{abstract}

\section{Introduction}
\label{sec:intro}

In computable model theory, we investigate
the extent to which various model-theoretic
constructions can be performed effectively.
For instance, given two structures, model theorists
naturally wish to consider isomorphisms between them.
In computable model theory,
we break this down into two problems.
First we pose the \emph{Isomorphism Problem},
in which we ask how difficult it is to determine in general
whether two given structures are isomorphic to each other at all.
If they are indeed isomorphic, then we ask about
the difficulty of actually computing an isomorphism
between them.  This latter question involves
the notion of \emph{computable categoricity}.
When asking these questions, we usually assume that the
structures themselves are computable,
meaning that we can compute the functions and relations
on them.  If they are in fact isomorphic,
then it is reasonable to ask whether there
exists a computable isomorphism between them.

Fields were the first mathematical structures
for which the notion of computable categoricity arose.
The isomorphism problem for fields is addressed
by Calvert in \cite{C04}.  Long before that, though,
in \cite{FS56}, Frohlich and Shepherdson had begun to
consider the second question, by giving an
example of (in their terminology; see their Corollary 5.51)
two isomorphic, explicitly presented fields with no explicit isomorphism
between them.  This idea eventually grew into the following definition.
\begin{defn}
\label{defn:cc}
A computable structure $\A$ is \emph{computably categorical}
if for every computable structure $\B$ isomorphic
to $\A$, there exists a computable isomorphism
from $\A$ onto $\B$.
\end{defn}
From the point of view of our second question, such a structure
is about as nice as it could be:  no matter which
two computable copies of $\A$ we choose,
there must exist a computable isomorphism between them.
(An even nicer version,
called \emph{uniform computable categoricity}
and examined in \cite{DHK03},
requires not only that a computable isomorphism
exist, but that we should be able to figure out
a program for computing it, just given the ability
to compute the structures $\A$ and $\B$.)
If the structure is not computably categorical,
we may ask how strong an oracle is required
to compute isomorphisms between two computable copies;
for a consideration of this question for algebraic
fields, for example, see \cite{M09}.

Much research has been devoted to characterizing
the computably categorical models of various theories,
including work by Dzgoev, Goncharov, Khisamiev,
Lempp, McCoy, Miller, Remmel, and Solomon.
Some results are readily stated:  we know that a computable
linear order is computably categorical if and only if it has
only finitely many pairs of consecutive elements,
for example, and that a computable Boolean algebra
is computably categorical if and only if it has finitely many atoms.
On the other hand, the known structural characterization
of computably categorical trees requires a description
by recursion on the heights of finite trees.
The question has been studied
for a number of other theories as well, and
results along these lines may be found in
\cite{G75}, \cite{G82}, \cite{G98}, \cite{GD80},
\cite{GLS03}, \cite{KS97},
\cite{LMMS05}, \cite{M05}, \cite{R81a}, and \cite{R81b}.

However, the original problem of computable categoricity for fields
has defied all attempts at structural characterization.
The most obvious conjecture would be that
the transcendence degree of a field over its
prime subfield should determine computable categoricity.
For algebraically closed fields, this is indeed
the case, as shown by Ershov in \cite{E77}:
an ACF is computably categorical if and only if it has
finite transcendence degree over its prime subfield.
However, in the same work, Ershov built
a field, algebraic over its prime subfield but not algebraically closed,
which was not computably categorical,
and a number of further results for algebraic fields
have been developed in \cite{MS12} by Shlapentokh and one of us,
who then extended them in \cite{HKMS12} jointly with Hirschfeldt and Kramer.
In this paper we refute the converse as well,
by constructing a computably categorical field of infinite
transcendence degree over the rationals $\Q$.
Thus, neither implication in the original
conjecture actually holds.

The counterexample $F$ we build is readily described.
It begins with an infinite, purely transcendental
extension $\Q(x_0,x_1,\ldots)$ of $\Q$.  Then, for each $i$,
we adjoin an element $y_i$ such that $(x_i,y_i)$ is a solution
of the Fermat curve $D_{p_i}$ defined by
$X_i^{p_i}+Y_i^{p_i}=1$, for an odd prime $p_i$.
Thus each $y_i$ is algebraic over $x_i$, but the infinite
set $\set{x_i}{i\in\omega}$ is still algebraically independent.
Fermat's Last Theorem shows that each $D_{p_i}$ has only
the trivial solutions $(0,1)$ and $(1,0)$ in $\Q$,
and the heart of our proof is a demonstration that
there exists a computable sequence $p_0<p_1<\cdots$ such that
in $F$, every nontrivial solution of the equation of $D_{p_i}$
generates the same subfield, namely $\Q(x_i,y_i)$.
Therefore, mapping $x_i$ and $y_i$ to any nontrivial
solution of $D_{p_i}$ in a field $\Ftilde$ isomorphic to $F$
will define an isomorphism.  The algebraic geometry
required is developed in Section \ref{sec:AG},
and the sequence $\la p_i\ra$ is chosen and the
field presented in Section \ref{sec:ccfields}.
In Section \ref{sec:basis}, we use this construction
to give the first example of a computable field possessing
an infinite, intrinsically computable transcendence basis.

We wish to make note here of unpublished work by Kudinov
and Lvov.  Working jointly, they addressed the same question,
about computable categoricity for fields of infinite
transcendence degree, and made significant progress on it.
Like us, they combined techniques from algebraic geometry and
computability theory, but their investigations were
unfortunately cut short when Lvov passed away, and it has
not been possible to reconstruct their work.  As we
understand it, their constructions did not make use of
the Fermat polynomials -- which suggests that there
are alternative ways to approach this problem, awaiting discovery
(or re-discovery) by an enterprising researcher.
We salute the efforts of Kudinov and Lvov, and regret the
demise of the latter.

We describe our principal conventions for this paper.
A \emph{computable field} is a structure
in the signature with addition and multiplication,
whose domain is an initial segment of $\omega$,
and for which those two operations are computable
and define a field.  This conforms to the usual
computable-model-theoretic definition of a computable structure.
An introduction to such fields for non-logicians
is given in \cite{Notices08} and continued in \cite{Proceedings08}.
Classic references on computable fields include
\cite{E77}, \cite{EG00}, \cite{FJ86}, \cite{FS56}, \cite{K1882},
\cite{MN79}, \cite{R60}, and \cite{ST99}.
Our computability-theoretic notation is standard
and can be found in \cite{S87}.

The field $\Q$
is known to be computably categorical, and so we will often
just write $\Q$ to denote a computable presentation
of that field, without concern for the specifics of the presentation.
Given a computable field $F$, its
polynomial ring $F[X]$ may be viewed just as
the set $F^*$ of finite tuples of elements of $F$,
with $\la a_0,\ldots,a_d\ra$ identified with $\sum a_i X^i$.
(For a perfect identification, ensure that if $a_d=0$, then $d=0$.)
Iterating this process yields a computable presentation
of the ring $F[X_1,\ldots, X_n]$, uniformly in $n$.

Given a computable field $F$, we will
treat any singly-generated field extension $F(x)$
as a computable field as well.  To compute it,
we will need to know whether $x$ is algebraic
over $F$ or not, and if it is, we will need
its minimal polynomial $p(X)$ over $F$.
In the algebraic case, one views elements of $F(x)$
as $F$-linear combinations over the set
$\{ 1,x,x^2,\ldots,x^{d-1}\}$, where $d=\deg(p)$,
with the obvious addition and multiplication
(which requires knowledge of $p(X)$, of course).
In the transcendental case, $F(x)$ is just the quotient field
of the domain $F[X]$ given above, and this
quotient field is computably presentable
as the Cartesian product $F[x]\times (F[x]-\{ 0\})$
modulo a computable equivalence relation.  We can iterate these
extensions, even over infinitely many generators,
as long as the minimal polynomial (or lack thereof)
for each generator over the preceding ones is given
effectively.  Notice that the base field $F$ is
a computable subfield of each extension built this way.

\newcommand\spec{\operatorname{Spec}}
\newcommand\tensor\otimes
\newcommand\iso\cong

\section{Results from algebraic geometry}
\label{sec:AG}

We now introduce some notation, review some
 algebraic geometry, and prove the results
 from this topic which will be needed in the subsequent sections.
 Since these results are nontrivial, we will assume
 a significant algebraic background; full explanation
 would require much more space.  Hartshorne's book \cite{H77}
 provides a comprehensive description of the subject.

 Let $k$ be a field of characteristic zero and fix  an algebraic closure
$\bar k$ of $k$.
 A
\emph{variety} $V$ over $k$ is by definition an absolutely irreducible,
separated, reduced scheme of
finite type over $k$. Recall that $V$ is called \emph{absolutely irreducible}
if the base change $V\times_k\bar k$, (or more correctly, the fiber
product $V\times_{\spec k}\spec \bar k$) is irreducible.
In fact, $V\times_kK$ is then irreducible for any extension $K$ of $k$.

Suppose first that $V$ is \emph{affine}, that is to say, a
closed
subscheme of affine space $\mathbb A_k^n$ for some $n$, or equivalently, a
scheme of the form $V=\spec A$ with $A$  an absolutely irreducible (meaning
that $A\tensor_k\bar k$ is a domain), finitely generated   $k$-algebra. For any extension
field $K$ of $k$, we define the \emph{$K$-rational points} of $V$, denoted
$V(K)$, as the collection of all $P\in \mathbb A_K^n(K)=K^n$ such that $P$ lies
on
$V$ (more precisely, in scheme-theoretic terms, this means that the morphism
$\spec
K\to \mathbb A_K^n$ induced by $P$  factors through $V$, yielding a morphism   $\spec K\to
V$; or equivalently, that $P$ is given by a maximal ideal
$\mathfrak m_P$ in $A$ such that $A/\mathfrak m_P\iso K$).
We define the
\emph{coordinate ring $k[V]$ of $V$ over $k$} as the residue ring
$k[x_1,\dots,x_n]/I_V$, where $x_i$
are variables and $I_V$ is the ideal of all polynomials vanishing on $V$ (that
is to say, the collection of all $f\in k[x_1,\dots,x_n]$ such that $f(P)=0$ for
all
$P\in V(\bar k)$).
Hence, in the above notation,
$A\iso k[V]$.
Since we assumed $V$ to be irreducible, $I_V$ is a prime
ideal and remains so when extended to $\bar k[x_1,\dots,x_n]$; we call such an
ideal \emph{absolutely prime}.
Conversely, if $I$ is an absolutely prime ideal in
 $k[x_1,\dots,x_n]$, the
\emph{variety defined by $I$} is the scheme
$V:=\spec (k[x_1,\dots,x_n]/I)$; for any extension $k\subseteq
K$, its
$K$-rational points
are precisely those $P\in K^n$ such that $f(P)=0$ for all $f\in I$.
This establishes a one-one
correspondence between (affine) subvarieties of $\mathbb A_k^n$
and absolutely prime ideals in $k[x_1,\dots,x_n]$ (which in turn are
in one-one correspondence with the 
absolutely irreducible, finitely generated $k$-algebras).

We define the   \emph{function field   $k(V)$ of $V$ over $k$} as the
field of fractions of $k[V]$. The function field of a variety is a so-called
\emph{birational invariant}, meaning that it only depends on the birational
class of $V$. In particular, by resolution of singularities in characteristic
zero, we may therefore
assume, when studying the function field, that the variety has no
singularities. The following quantities are all equal and are called
the \emph{dimension} of $V$:
\begin{itemize}
\item the transcendence
degree of $k(V)$ over $k$;
\item the least number of \emph{hypersurfaces} (=variety
defined by a single, absolutely irreducible equation) $H_1,\dots, H_d$ such
that $V(\bar k)\cap H_1(\bar k)\cap\dots\cap H_d(\bar k)$ is non-empty and finite;
\item the combinatorial dimension of $V(\bar k)$ viewed as a topological space
via its Zariski topology;
\item the Krull dimension of $k[V]$.
\end{itemize}
A point $P$ on $V$ is called \emph{generic over $k$} if the field $k(P)$
generated by
its coordinates is isomorphic to the function field of $V$, or equivalently,
if the
transcendence degree of $k(P)$  over $k$ is equal to the dimension of $V$.
Conversely, any point $P=(p_1,\dots,p_n)\in  K^n$ in some extension field
$K\supseteq k$ can be viewed
as the generic point of an affine variety $V$ over $k$:  let $\mathfrak m_P$
be the maximal ideal in $K[x_1,\dots,x_n]$ generated by the linear polynomials
$X_i-p_i$, let $I_V$ be the contraction of this ideal to
$k[x_1,\dots,x_n]$, and let $V$ be the affine variety with coordinate ring
$k[V]:=k[x_1,\dots,x_n]/I_V$. One verifies that $k(V)$ is isomorphic with the
field $k(P)$, that is to say, $P$ is a generic point of $V$.   If $V$ is not
affine, then its function field
can still be defined as the function field $k(V_0)$ of any non-empty  affine
open subset $V_0$ of $V$. In particular, when studying the function field, we
may take the variety to be projective as well. On occasion, we will use the
following simple observation:

\begin{lemma}\label{L:ac}
If $V$ is a variety over $k$, then $k$ is algebraically closed inside $k(V)$.
\end{lemma}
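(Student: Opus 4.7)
The plan is to show that any $\alpha \in k(V)$ which is algebraic over $k$ must in fact lie in $k$, by using absolute irreducibility to rule out nontrivial idempotents in $k(V) \otimes_k \bar k$.

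First, I would reduce to the affine case: if $V$ is not affine, replace it by any nonempty affine open subvariety $V_0$, since this does not change the function field. So we may assume $k[V]$ is an absolutely irreducible finitely generated $k$-algebra, meaning $k[V] \otimes_k \bar k$ is a domain.

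Next, fix $\alpha \in k(V)$ algebraic over $k$ with minimal polynomial $p(X) \in k[X]$ of degree $n \geq 1$; the goal is to show $n = 1$. Since $\mathrm{char}(k) = 0$, the polynomial $p$ is separable, so over $\bar k$ it factors as a product of $n$ distinct linear factors. Consequently,
\[
k[\alpha] \otimes_k \bar k \;\cong\; \bar k[X]/(p(X)) \;\cong\; \prod_{i=1}^n \bar k,
\]
which is a $\bar k$-algebra with $n$ orthogonal nonzero idempotents. Since $\bar k$ is $k$-flat, the inclusion $k[\alpha] \hookrightarrow k(V)$ tensors up to an inclusion $k[\alpha] \otimes_k \bar k \hookrightarrow k(V) \otimes_k \bar k$, so these $n$ idempotents survive in $k(V) \otimes_k \bar k$.

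Finally I would show $k(V) \otimes_k \bar k$ has no nontrivial idempotents. Writing $S = k[V] \setminus \{0\}$, flatness of $\bar k/k$ gives
\[
k(V) \otimes_k \bar k \;=\; S^{-1}k[V] \otimes_k \bar k \;=\; S^{-1}\bigl(k[V] \otimes_k \bar k\bigr),
\]
so $k(V) \otimes_k \bar k$ is a localization of the domain $k[V] \otimes_k \bar k$ and hence embeds into its field of fractions. In particular it is itself a domain, so it has only the idempotents $0$ and $1$. Comparing with the previous paragraph forces $n = 1$, i.e.\ $\alpha \in k$, as desired.

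The only subtle point is bookkeeping with the tensor products: one has to invoke flatness of $\bar k$ over $k$ both to preserve injectivity of $k[\alpha] \hookrightarrow k(V)$ and to commute the localization with the base change. Everything else is routine once absolute irreducibility is interpreted as ``$k[V] \otimes_k \bar k$ is a domain'' and characteristic zero is used to guarantee separability of the minimal polynomial.
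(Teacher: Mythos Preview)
Your proof is correct and follows essentially the same approach as the paper: both argue that for a nontrivial finite subextension $l=k[\alpha]\subseteq k(V)$, the base change $l\otimes_k\bar k\cong\bar k^{\,n}$ has zero divisors, while flatness embeds it into $k(V)\otimes_k\bar k$, which is a domain by absolute irreducibility. The only cosmetic difference is that the paper asserts directly that $k(V)\otimes_k\bar k=\bar k(V)$ is a field, whereas you more carefully verify it is a localization of the domain $k[V]\otimes_k\bar k$ and phrase the contradiction in terms of idempotents; your version is slightly more explicit but the idea is the same.
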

\begin{proof}
By assumption $V\times_k\bar k$ is irreducible, showing that
$k(V)\tensor_k\bar k$ is equal to the function field $\bar k(V)$ of $V$ over
$\bar k$.
Towards a contradiction, assume $k\subsetneq l\subseteq k(V)$ is
a finite extension of degree $d>1$. Hence $l\tensor_k\bar k\iso \bar
k^d$ is not a field,
which contradicts that $l\tensor_k\bar k\subseteq  k(V)\tensor_k\bar k$.
\end{proof}

\subsection{Curves}
By a  \emph{curve} over $k$ we will mean in this article a non-singular (also
called \emph{smooth}),
projective one-dimensional variety $C$ over $k$. Recall that for any one-dimensional
variety $X$ defined over $k$, there exists   a unique curve $C$ to which $X$ is
birationally equivalent, and moreover, if $X$ has no singularities, then it is
isomorphic to an open subset of $C$.
For instance, one can define a scheme structure on the
set $C_X$ of discrete valuations of $k(X)$ which are trivial on $k$, and then
show that $C_X$ is a curve birational to $X$ (see for
instance \cite[Theorem 7.3.1]{K93} or \cite[I. Corollary 6.11]{H77});
alternatively,
we can take a completion $\bar X$ of $X$ (e.g., the Zariski closure of $X$
viewed as a subvariety of some projective space over $k$), and then take its
normalization. A one-dimensional variety over $k$ which is birational to a curve
$C$ over $k$ is sometimes called a \emph{model} of $C$. Recall (see for
instance \cite[IV. Corollary 3.11]{H77}) that any curve admits an affine
plane model (with at most nodes as singularities). By Lemma~\ref{L:ac}, a point
$P$ on $C$ is generic (over $k$) if and only if $P\notin C(\bar k)$.

One can associate to any curve $C$ a natural number $g(C)$, called its
\emph{genus}, which is a birational invariant and does not change when
extending $k$ to a larger base field.
In case $C$ is a plane curve (that is, a curve in $\mathbb P_k^2$),
then its genus is calculated as $g(C):=(d-1)(d-2)/2$, where
$d$ is the degree of the defining equation of $C$ (see for instance \cite[I. Exercise 7.2]{H77}).

Any morphism between curves $C\to D$ defined over $k$  is
either
\emph{constant} (that
is to say, its image is a single point, necessarily $k$-rational), or otherwise
\emph{surjective}.
In the latter case,
we have an inclusion of
function fields
$k(D)\subseteq k(C)$, which is necessarily a finite
extension, since both $k(C)$ and $k(D)$ have transcendence degree one over $k$.
It follows that $C\to D$
 is a \emph{finite} morphism of
\emph{degree}
$d:=((k(C):k(D))$; in particular, all the fibers of $C\to
D$ are   finite, of cardinality at most $d$.

The following fact will be
quite useful.

\begin{prop}\label{P:mor}
Let $C$ be a curve and $V$ any variety, both defined over a field $k$.
We have a functorial bijection of sets
$$ \operatorname{Mor}_k(C,V) \iso V(k(C)), $$
and under this bijection, the morphism $C\to V$ is constant if and only if
the associated $k(C)$-rational point of $V$ is   $k$-rational.  
\end{prop}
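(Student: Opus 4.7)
The plan is to construct maps in both directions and verify they are mutually inverse. The forward map $\operatorname{Mor}_k(C,V)\to V(k(C))$ is easy: given $\varphi\colon C\to V$, I compose with the canonical morphism $\operatorname{Spec} k(C)\to C$ picking out the generic point $\eta_C$ of $C$ to obtain an element of $\operatorname{Mor}_k(\operatorname{Spec} k(C),V)=V(k(C))$. Functoriality in $V$ is clear from this description, and the construction evidently assigns $k$-rational points to constant morphisms.

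For the substantive direction, given $P\colon\operatorname{Spec} k(C)\to V$ I must extend it uniquely to a morphism on all of $C$. Here the smoothness of $C$ enters crucially: at each closed point $x\in C$, the local ring $\mathcal{O}_{C,x}$ is a discrete valuation ring with fraction field $k(C)$. Invoking the valuative criterion of properness on (the projective variety) $V$, the morphism $\operatorname{Spec} k(C)\to V$ extends uniquely to $\operatorname{Spec}\mathcal{O}_{C,x}\to V$ for every closed $x\in C$. These local extensions glue, thanks to separatedness of $V$ and the uniqueness clause of the valuative criterion, into a single morphism $\varphi_P\colon C\to V$. The two assignments are inverse: restricting $\varphi_P$ to $\eta_C$ recovers $P$ tautologically, and conversely any $\varphi\colon C\to V$ agrees with the extension of its own generic restriction by the uniqueness of that extension.

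To handle the constant/$k$-rational dichotomy, note that $\varphi\colon C\to V$ is constant precisely when it factors as $C\to\operatorname{Spec} k\to V$ through some $p\in V(k)$; restricting such a $\varphi$ to $\eta_C$ shows that the associated $k(C)$-rational point is just the image of $p$ under $V(k)\hookrightarrow V(k(C))$, hence is $k$-rational. Conversely, if the $k(C)$-rational point factors through $\operatorname{Spec} k$, then the constant morphism $C\to\operatorname{Spec} k\to V$ at that $k$-rational point is itself an extension of $P$, so by uniqueness it coincides with $\varphi_P$, which is therefore constant.

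The one real technical obstacle is the invocation of the valuative criterion, which requires properness of $V$; the rest of the argument is bookkeeping with generic points, uniqueness of extensions, and gluing. In all of the intended applications of this proposition $V$ will be a projective variety (indeed a plane curve), so there is nothing further to do; for a general (possibly affine) $V$ one would first embed $V$ in a proper compactification, apply the above to the compactification, and then verify that the resulting morphism factors through $V$.
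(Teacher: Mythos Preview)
Your proof is correct and follows essentially the same route as the paper's: both directions are constructed in the same way, and the key step in extending a $k(C)$-rational point to a morphism $C\to V$ is the valuative criterion for properness, exactly as in the paper. Your treatment is in fact more careful on two points the paper glosses over: you spell out the local-to-global gluing via separatedness and uniqueness, and you explicitly flag that the valuative criterion requires $V$ proper (the paper's statement allows an arbitrary variety $V$, but its proof, like yours, only works as written when $V$ is proper---which suffices for every application in the paper).
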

\begin{pf}
Let $P$ be a
$k(C)$-rational point on $V$, which therefore corresponds to
a morphism $\spec k(C)\to V$. By the valuative criterion for properness
(see for instance \cite[I. Proposition 6.8 or II. Theorem 4.7]{H77} or
\cite[Proposition 7.2.3]{K93}), this extends to a morphism $C\to V$.
Conversely given a morphism $v\colon C\to V$, choose affine open sets $C_0\subseteq C$
and $V_0\subseteq V$ such that $v(C_0)\subseteq V_0$. Hence we have an induced
$k$-algebra homomorphism $k[V_0]\to k[C_0]$. Composing this with the inclusion
$k[C_0]\subseteq k(C_0)=k(C)$ then yields a $k(C)$-rational point
$\spec k(C)\to V$.

Functoriality here means that we can view $\operatorname{Mor}_k(\cdot,V)$
and $V(\cdot)$ as contravariant functors, and the above bijection is compatible
with these functors in the following sense. Given a non-constant morphism $C\to D$,
composition   yields a map $\operatorname{Mor}_k(D,V)\to \operatorname{Mor}_k(C,V)$.
Moreover, we have an extension $k(D)\subset k(C)$ of function fields, giving rise
to an inclusion $V(k(D))\subset V(k(C)$. One now easily checks that we have a
commutative diagram
$$
\commutativesquare{\operatorname{Mor}_k(D,V)}{\iso}{V(k(D))}{}{}{
\operatorname{Mor}_k(C,V)}\iso {V(k(C))}
$$
Moreover, these bijections are  also compatible when viewed as (covariant) functors
in their second component, that is to say, given a non-constant morphism $V\to W$,
composition yields $\operatorname{Mor}_k(C,V)\to  \operatorname{Mor}_k(C,W)$,
and we get a natural map $V(k(C))\to W(k(C))$, making  the diagram
$$
\commutativesquare{\operatorname{Mor}_k(C,V)}{\iso}{V(k(C))}{}{}{
\operatorname{Mor}_k(C,W)}\iso {W(k(C))}
$$
commute as well.
\qed\end{pf}

Moreover, if $V$ is actually a curve, then by our above discussion,
under this isomorphism, the morphism $C\to V$ is constant if and only if
the associated $k(C)$-rational point of $V$ is in fact $k$-rational.

Given two curves $C$ and $D$ over $k$, we say that $C$
\emph{covers} (or \emph{dominates}) $D$ if there exists a non-constant morphism $C\to D$ defined over some field extension $K\supseteq k$.
Note that such a $C\to D$ is then automatically finite and surjective.
By Proposition \ref{P:mor}, therefore, $C$ does not cover  $D$ if and only if
$\operatorname{Mor}_K(C,D) \iso D(K)$ for every $K\supseteq  k$.

\begin{lemma}\label{L:noncov}
Let $C$   and   $D_i$ be  curves over $k$ such that no $D_i$ covers $C$.
If $F$ is the field
generated by all the function fields $k(D_i)$, then $C(F)=C(k)$.
\end{lemma}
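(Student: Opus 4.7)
The plan is to reduce a point $P\in C(F)$ to a $k$-morphism from a finite product of the $D_i$ to $C$, and then use the non-covering hypothesis together with Proposition~\ref{P:mor} inductively to conclude this morphism is constant.

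By Lemma~\ref{L:ac}, $k$ is algebraically closed inside each $k(D_i)$, and since we are in characteristic zero this makes each $k(D_i)$ a regular extension of $k$.  Hence any finite tensor product $k(D_{i_1})\otimes_k\cdots\otimes_k k(D_{i_m})$ is a domain, and its fraction field coincides with the function field $k(W)$ of the product variety $W := D_{i_1}\times_k\cdots\times_k D_{i_m}$ (which is itself a variety, because the product over $k$ of absolutely irreducible varieties is absolutely irreducible).  Consequently $F$ is the directed union of these fields $k(W)$ taken over finite index subsets, and any $P\in C(F)$ already lies in $C(k(W))$ for some such finite product $W$.  By Proposition~\ref{P:mor}, $P$ corresponds to a $k$-morphism $v\colon W\to C$, and by the remark following that proposition $v$ is constant if and only if $P\in C(k)$.

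It therefore suffices to prove, by induction on $m$, the following uniform statement: for every extension $K\supseteq k$ and every $m$-tuple of curves $D_{i_1},\dots,D_{i_m}$ from the given list, every $K$-morphism $W_K\to C_K$ is constant, where $W_K$ denotes the base change of $W$ to $K$.  The case $m=1$ is immediate: a non-constant morphism $D_{i_1,K}\to C_K$ would be surjective (since both source and target are curves), thereby witnessing that $D_{i_1}$ covers $C$ over the extension $K$, contrary to hypothesis.  For the inductive step write $W = D_{i_1}\times_k W'$ with $W' := D_{i_2}\times_k\cdots\times_k D_{i_m}$ and set $K' := K(D_{i_1,K})$.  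The projection $W_K\to D_{i_1,K}$ gives the identification $K(W_K) = K'(W'_{K'})$.  Applying Proposition~\ref{P:mor} once over $K$ converts $v\colon W_K\to C_K$ into a $K(W_K)$-rational point of $C_K$, and applying it again over $K'$ recasts this as a $K'$-morphism $W'_{K'}\to C_{K'}$.  The inductive hypothesis, applied with base field $K'$ (whose extensions are extensions of $k$, so the non-covering hypothesis persists), forces this morphism to be constant.  Thus the $K(W_K)$-rational point is actually $K'$-rational, which means $v$ factors through the projection $W_K\to D_{i_1,K}$; the $m=1$ case then forces the composite to be constant, as required.

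The main technical obstacle is the careful bookkeeping of function fields under base change, specifically the identification $K(W_K)=K'(W'_{K'})$ and the verification that absolute irreducibility is preserved under taking products, so that Proposition~\ref{P:mor} can legitimately be reapplied in the inductive step.  Once these points are settled, the argument is a clean iteration of the proposition.
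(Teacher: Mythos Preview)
Your approach is in essence the same induction as the paper's --- peel off one $D_i$ at a time and invoke Proposition~\ref{P:mor} at each step --- but your packaging introduces an unnecessary technical wrinkle.  You invoke Proposition~\ref{P:mor} to pass between points of $C(k(W))$ and $k$-morphisms $W\to C$ where $W=D_{i_1}\times_k\cdots\times_k D_{i_m}$; but that proposition, as stated, gives the bijection $\operatorname{Mor}_k(X,V)\cong V(k(X))$ only when the \emph{source} $X$ is a curve (the valuative criterion step uses that the local rings of $X$ are DVRs).  For $m\geq 2$ your $W$ is not a curve, so the point-to-morphism direction is not covered by what is proved.  The paper sidesteps this entirely: it never forms the product $W$ or a morphism out of it, but simply sets $F_i:=F_{i-1}(D_i)$ and, given $P\in C(F_i)=C(F_{i-1}(D_i))$, applies Proposition~\ref{P:mor} over the base field $F_{i-1}$ with the \emph{curve} $D_i$ as source to get a morphism $D_i\to C$ over $F_{i-1}$, which the non-covering hypothesis forces to be constant, whence $P\in C(F_{i-1})$.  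This is exactly your induction with the roles reorganized so that every invocation of Proposition~\ref{P:mor} is legitimate as stated; your strengthened quantification over all $K\supseteq k$ and the bookkeeping about $K(W_K)=K'(W'_{K'})$ then become unnecessary.
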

\begin{proof}
The essence of the proof is to build $F$ not all at once from the fields $k(D_i)$,
but rather by induction.
Let $F_i$ be the subfield of $F$  generated by all $k(D_j)$ with $j\leq i$, that
is to
say, $F_i$ is defined inductively as the function field $F_{i-1}(D_i)$ of the curve $D_i$ viewed as a curve over $F_{i-1}$ (with $F_0:=k$). It suffices
to show by induction on $i$ that $C(F_i)=C(k)$. Suppose $P$
is an $F_i$-rational point on $C$ , and we want to show that $P$ is
$k$-rational. Since $P\in C(F_{i-1}(D_i))$, we get from Proposition \ref{P:mor}
a morphism $D_i\to C$ defined over $F_{i-1}$. Since $D_i$ does not cover $C$,
this morphism must be constant, showing that $P$ belongs to $C(F_{i-1})$ and
by induction, the latter is just $C(k)$.
\end{proof}

We will also use the following well-known
inequality (see for instance \cite[Corollary 2.4]{H77}),
which leads to a quick proof
of Proposition \ref{P:gen2} under the assumption
that the genus is at least $2$.  (Our construction in Section \ref{sec:ccfields}
will require a genus $\geq 2$ in any case.) 

\begin{theorem}\label{T:Hur}
Let $k$ be an algebraically closed field of characteristic zero and let $C$
and $D$ be curves of genus $g(C)$ and $g(D)$ respectively. If $C\to D$ is a
finite morphism of degree $d$, then $g(C)-1\geq d\cdot (g(D)-1)$.
\end{theorem}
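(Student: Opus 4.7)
The plan is to derive the Riemann--Hurwitz formula and then observe that its ``error term'' is non-negative in characteristic zero. Write $\phi\colon C\to D$ for the given finite morphism of degree $d$. The fundamental identity to establish is
\[K_C \sim \phi^* K_D + R,\]
where $K_C, K_D$ are canonical divisors and $R=\sum_{P\in C}(e_P-1)P$ is the ramification divisor, with $e_P$ the ramification index of $\phi$ at $P$. Once this identity is in hand, taking degrees yields
\[2g(C)-2 = d\cdot (2g(D)-2) + \deg R,\]
using that $\deg K_X = 2g(X)-2$ for a smooth curve $X$, and that pullback of divisors under a degree-$d$ map multiplies degree by $d$. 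Since we are in characteristic zero, every $e_P\geq 1$ and the extension of local function fields at each ramified point is automatically separable (tame), so $\deg R\geq 0$. The desired inequality $g(C)-1\geq d(g(D)-1)$ then follows immediately upon dividing by $2$.

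The heart of the argument is the identity $K_C\sim \phi^* K_D + R$, which I would establish via the relative cotangent sheaf. Because $\phi$ is a finite, flat morphism of smooth curves (so $\Omega_{D/k}$ and $\Omega_{C/k}$ are line bundles), there is a short exact sequence
\[0\to \phi^*\Omega_{D/k}\to \Omega_{C/k}\to \Omega_{C/D}\to 0.\]
Taking determinant line bundles of this sequence gives $\Omega_{C/k}\cong \phi^*\Omega_{D/k}\otimes \det\Omega_{C/D}$. The sheaf $\Omega_{C/D}$ is a torsion sheaf supported on the (finite) ramification locus, and a standard local calculation --- picking a uniformizer $s$ at $\phi(P)$ and a uniformizer $t$ at $P$ so that $s = u\cdot t^{e_P}$ for a unit $u$, then noting $ds = (e_P u\, t^{e_P-1}+t^{e_P}u')\,dt$ --- shows that $\Omega_{C/D}$ has length exactly $e_P-1$ at $P$ in characteristic zero, because $e_P$ is invertible, so $e_P u\, t^{e_P-1}$ controls the leading order. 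This identifies $\det\Omega_{C/D}$ with $\mathcal{O}_C(R)$ and yields the claimed divisor relation.

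The main obstacle is precisely this local ramification computation; it is where the characteristic-zero hypothesis is essential. In positive characteristic one can have wild ramification with $e_P$ divisible by the residue characteristic, in which case the factor $e_P u\, t^{e_P-1}$ vanishes identically and the length of $\Omega_{C/D}$ at $P$ strictly exceeds $e_P-1$. The inequality would still hold in the tame case but would need correction terms in general; the characteristic-zero assumption rules this out entirely and makes the contribution of each ramified point exactly $e_P-1$, yielding the clean statement. I would refer the reader to Hartshorne for the verification that $\Omega_{C/D}$ is concentrated on the ramification locus and for the standard manipulations of determinant line bundles, focusing the exposition on the local computation above.
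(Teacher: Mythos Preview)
Your proposal is correct and follows essentially the same approach as the paper: apply the Riemann--Hurwitz formula $2g(C)-2 = d(2g(D)-2) + \deg R$ and observe that the ramification divisor $R$ is effective in characteristic zero. The only difference is one of detail: the paper simply cites Hurwitz's formula from Hartshorne (IV, Corollary~2.4) and notes that separability in characteristic zero makes $R$ effective, whereas you sketch the derivation of the formula itself via the relative cotangent sequence and the local computation of the length of $\Omega_{C/D}$.
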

\begin{proof}
Since the characteristic is zero, the morphism is separable, so that Hurwitz's Formula  yields
$$
2(g(C)-1)= 2d(g(D)-1)+\operatorname{deg}(R),
$$
 where $R$ is the
(effective) \emph{ramification divisor} of $C\to D$; see for instance \cite[IV.
Corollary 2.4]{H77}.
\end{proof}

\begin{proposition}\label{P:gen2}
Let $k$ be a field of characteristic zero and let $C$ be a curve
over
$k$ of genus $g\geq 2$. Then the function field $K:=k(C)$ of $C$ is generated by
the
coordinates of any $K$-rational  point $P$ of $C$ which is not $k$-rational,
that is to say, for any $P\in C(K)\setminus C(k)$, the natural inclusion
$k(P)\subseteq K$ is an equality.
\end{proposition}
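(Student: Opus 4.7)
The plan is to translate the $K$-rational point $P$ into a self-morphism of the curve $C$ and then squeeze its degree using the Hurwitz bound of Theorem \ref{T:Hur}.

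First I would apply Proposition \ref{P:mor} with $V = C$. Since $K = k(C)$, the $K$-rational point $P \in C(K)$ corresponds functorially to a morphism $\phi \colon C \to C$ defined over $k$. The remark immediately following Proposition \ref{P:mor} tells us that $\phi$ is constant precisely when $P$ lies in $C(k)$; by hypothesis this is not the case, so $\phi$ is non-constant. Any non-constant morphism between curves is finite and surjective, as recalled earlier in Section \ref{sec:AG}, of some degree $d := [k(C) : \phi^* k(C)]$. Tracing through the construction in the proof of Proposition \ref{P:mor} — restrict to suitable affine opens $C_0 \subseteq C$ and $V_0 \subseteq C$ with $\phi(C_0)\subseteq V_0$, then pull back the coordinate functions on $V_0$ — one sees that $\phi^* k(C)$ is exactly the subfield $k(P) \subseteq K$ generated by the coordinates of $P$. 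Hence $[K : k(P)] = d$.

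Next I would base change to $\bar k$. The genus of a smooth projective curve is invariant under extension of the base field, and the degree of a finite morphism is likewise preserved, so $\phi_{\bar k} \colon C_{\bar k} \to C_{\bar k}$ is a finite morphism of degree $d$ between smooth projective curves of genus $g \geq 2$. Theorem \ref{T:Hur} then gives
\[
g - 1 \;\geq\; d \cdot (g - 1).
\]
Because $g - 1 \geq 1$, this inequality forces $d \leq 1$, so $d = 1$. Therefore $[K : k(P)] = 1$, i.e., $k(P) = K$, which is exactly the claimed equality.

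The main obstacle here is conceptual rather than technical: one has to recognize that a $K$-rational point on $C$, where $K$ is itself the function field of $C$, is the same data as a self-morphism of $C$, so that Hurwitz's genus inequality becomes applicable. Once this identification via Proposition \ref{P:mor} is in place, together with the bookkeeping that $\phi^* k(C)$ coincides with $k(P)$, the actual arithmetic — deducing $d = 1$ from $g - 1 \geq d(g-1)$ — is immediate under the standing assumption $g \geq 2$. This is precisely the ``quick proof'' the excerpt advertises in advance of the statement.
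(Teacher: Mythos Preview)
Your argument is correct and follows essentially the same route as the paper: translate $P\in C(K)\setminus C(k)$ into a non-constant self-morphism $C\to C$ of degree $d=[K:k(P)]$, then use the Hurwitz inequality $g-1\geq d(g-1)$ with $g\geq 2$ to force $d=1$. The only cosmetic difference is in the passage to the algebraically closed case---you base-change the morphism to $\bar k$ and invoke invariance of degree and genus, whereas the paper first proves the result over $\bar k$ and then descends via a tower-of-degrees argument using $k(P)\cong K$ as abstract $k$-fields---but the substance is identical.
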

\begin{proof}
Let
$C_0$ be a (non-empty) affine open subset of $C$ and let
$A:=k[C_0]$ be its coordinate ring.
For instance, if $C_0$ is a
plane curve with affine equation $f=0$, then $A=k[x,y]/fk[x,y]$
and $K$ is the  field of fractions of $A$. We need to show that if
$P:=(a,b)\in K^2\setminus k^2$ satisfies $f(a,b)=0$, then $k(P):=k(a,b)$ is equal
to $K$.
In
any case, since $P$ is not defined over $k$, it is a generic point of $C$ by
Lemma~\ref{L:ac}. In particular,  $k(P)$ and $K$ are isomorphic over $k$ (for
instance, in the planar example, the $k$-algebra $k[a,b]$ is easily
seen to be $k$-isomorphic to $A$). In particular,  by our discussion of
Proposition \ref{P:mor}, the embedding $k(P)\subseteq K$ is finite
of degree $d:=(K:k(P))$ inducing  a finite morphism $C\to C$.
We need to show that $d=1$.

Assume first that $k$ is algebraically closed, so that we
can apply Theorem \ref{T:Hur} to this morphism, giving  $g-1\geq d(g-1)$. Since
$g\geq 2$, we must
have $d=1$, as we wanted to show. For the general case, it follows from the
algebraically closed case that there
exists a finite extension $l$ of $k$ such that $l(P)=l(C)$. Let $\pi$ be such
that $l=k(\pi)$, so that in particular $l(P)=k(P,\pi)$ and $l(C)=K(\pi)$.
However,
since $\pi$ is algebraic over $k$ whence over the $k$-isomorphic fields $k(P)$
and $K$, the degrees $(l(P):k(P))$ and $(l(C):K)$ are the same.
By transitivity, it follows that $(K:k(P))=1$, as we wanted to show.
\end{proof}

\begin{remark}
The proposition is false in positive characteristic $p$, precisely because of the
purely inseparable extenstion given by the Frobenius $\text{Frob}_p$: if $P$ is a
$K$-rational point of $C$, then so is its Frobenius transform Frob$_p(P)$, which
clearly generates a proper subfield.
\end{remark}

\begin{remark}\label{R:Hur}
By the argument in the proof, $g(C)\geq g(D)$ for any cover $C\to D$ over $k$
(not necessarily algebraically closed). Moreover, if these genera are equal
and $\geq 2$, then $C\to D$ must have degree one, hence is an isomorphism.
In summary, given a cover $C\to D$ of curves of genus at least two,
either $C\iso D$ (and the cover itself is an isomorphism) or $g(C)>g(D)$.
\end{remark}

\begin{remark}\label{R:genpt}
Let $C$ be a curve of genus $g$ at least two and let $K:=k(C)$ be its function
field.
Any non-constant morphism
$C\to C$ is necessarily an automorphism by our previous remark. Hence under
Proposition \ref{P:mor}, we have a one-one correspondence between the $K$-rational generic
points of $C$ and the automorphisms of $C$. In particular, $C(K)\setminus
C(k)$ has cardinality at most $84(g-1)$, as this is the maximum number of
automorphisms of
$C$ (see for instance \cite[IV Exercise 2.5]{H77}). In particular, if $C$ has
no non-trivial automorphism (which is the `generic' case for $g\geq 3$), then $C$
has a unique generic point (in any fixed function field).
\end{remark}

\subsection{General collections of curves}
By a \emph{general collection of curves over $k$}, we mean a countable set
$\mathcal C$ of  ($k$-isomorphism classes of) curves over $k$ 
of genus at least two, such that no two distinct curves in
$\mathcal C$ are isomorphic over $\bar k$. The \emph{function field} $k(\mathcal C)$ of
$\mathcal C$ is by definition  the field generated by all the function
fields of curves in
$\mathcal C$. More precisely, take a universal field $\Omega$ (that is, an
algebraically closed field containing $k$ and of cardinality larger than
any of the fields we use otherwise), and for
each $C\in\mathcal C$, fix a subfield $k_C\subseteq \Omega$ isomorphic to its
function field $k(C)$.  Then
$k(\mathcal C)$ is the subfield of $\Omega$ generated by all the $k_C$ with
$C\in\mathcal C$.  Note that the field generated by the subfields $k_{C_1},\dots,
k_{C_i}$ is isomorphic to the function field of the product variety
$C_1\times\dots\times C_i$, for $C_1,\dots,C_i$
distinct curves in $\mathcal C$. Moreover, $k(\mathcal C)$
is the union of   the  function fields of all such products. In
particular, $k(\mathcal C)$ is well-defined up to isomorphism.

\begin{proposition}\label{P:mem}
Let $\mathcal C$ be a general collection of curves over $k$ and let $k(\mathcal
C)$ be its  function
field. Suppose all curves in $\mathcal C$ have genus at most
$g$ and let $D$ be an arbitrary curve of genus at least $g$. Then the function
field $k(D)$ embeds in $k(\mathcal C)$ if and only if $D$ is isomorphic to 
some curve in $ \mathcal C$.
\end{proposition}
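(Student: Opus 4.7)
The ``if'' direction is trivial: if $D\cong C$ for some $C\in\C$, then $k(D)\cong k(C)\subseteq k(\C)$ by the construction of $k(\C)$. For the ``only if'' direction, assume $\iota\colon k(D)\hookrightarrow k(\C)$ is an embedding. Since $k(D)$ is finitely generated over $k$, the image of $\iota$ lies in the subfield generated by only finitely many $k_{C_j}$, that is, in the function field $F:=k(C_1\times\cdots\times C_n)$ for some distinct $C_1,\dots,C_n\in\C$. Restricting $\iota$ to the coordinate ring of any affine chart of $D$ yields an injective $k$-algebra homomorphism into $F$, which corresponds to an $F$-rational point $P\in D(F)$. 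Injectivity of $\iota$, combined with $k(D)\neq k$ (since $D$ has positive dimension), prevents $P$ from being $k$-rational.

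Next I would invoke Lemma~\ref{L:noncov} in contrapositive form, with $D$ playing the role of its ``$C$'' and $C_1,\dots,C_n$ playing the role of its ``$D_i$''. The inductive proof of that lemma pinpoints some index $i$ for which $P$ is $F_i$-rational but not $F_{i-1}$-rational, where $F_j:=k(C_1\times\cdots\times C_j)$; via Proposition~\ref{P:mor}, this datum is precisely a nonconstant morphism from $C_i$, viewed as a curve over $F_{i-1}$, to $D$. Hence $C_i$ covers $D$ over the extension $F_{i-1}\supseteq k$.

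Now I apply Remark~\ref{R:Hur} to this cover. Both genera are at least two, so Remark~\ref{R:Hur} forces $g(C_i)\geq g(D)$; together with the hypothesis $g(C_i)\leq g\leq g(D)$, this pins down $g(C_i)=g(D)=g$, and the equality clause of the remark then guarantees that the cover itself is an isomorphism. So $C_i\cong D$ over $F_{i-1}$. To finish, I descend this to a $\bar k$-isomorphism: the isomorphism scheme $\underline{\operatorname{Isom}}_k(C_i,D)$ is of finite type over $k$ (indeed finite over $\bar k$, since smooth projective curves of genus $\geq 2$ admit only finitely many isomorphisms between them), so any geometric point produces a $\bar k$-point. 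Hence $C_i\cong_{\bar k} D$, and since distinct curves in $\C$ are pairwise non-$\bar k$-isomorphic, $C_i$ is the unique curve in $\C$ to which $D$ is isomorphic.

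The main obstacle is the middle step: converting the field-theoretic embedding $\iota$ into a cover of $D$ by one specific curve $C_i$, rather than by the full product $C_1\times\cdots\times C_n$, which is not itself a curve and to which Remark~\ref{R:Hur} does not apply. Lemma~\ref{L:noncov} is tailor-made for this reduction, peeling off one factor at a time through Proposition~\ref{P:mor}; once this is in hand, both the genus pinching and the descent to $\bar k$ are routine.
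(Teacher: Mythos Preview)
Your proof is correct and follows essentially the same route as the paper's: reduce to finitely many curves by finite generation, take the minimal index $i$ at which the embedded point becomes rational, extract a cover $C_i\to D$ over $F_{i-1}$ via Proposition~\ref{P:mor}, and finish with the Hurwitz inequality (Remark~\ref{R:Hur}). Your appeal to Lemma~\ref{L:noncov} ``in contrapositive, unwinding its induction'' is just a repackaging of the paper's direct minimal-index argument; the paper also cites Proposition~\ref{P:gen2} to identify $k_{i-1}\cdot k_D$ with $k_{i-1}(D)$, which you implicitly fold into the same step.

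One point where you are actually more careful than the paper: the cover $C_i\to D$ and the resulting isomorphism live over $F_{i-1}$, not over $k$ or $\bar k$, whereas the statement (and the definition of a general collection) concerns $\bar k$-isomorphism classes. Your descent via the finite Isom-scheme is the right way to close this, and the paper leaves it implicit.
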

\begin{proof}
Let us write $k_D$ for the image of $k(D)$  in $k(\mathcal C)$. Let
$C_1,C_2,\dots$
be an enumeration of $\mathcal C$, and let $k_i:=k(C_1\times\dots\times
C_i)\subseteq k(\mathcal C)$ be as above, so that in particular $k(\mathcal C)$
is the union of all $k_i$.  Since
$k_D$ is finitely generated, it lies in some $k_i$. Let $i$ be minimal such.
 By Proposition~\ref{P:gen2} and minimality of $i$,  the function field $k_{i-1}(D)$ of $D$ over
$k_{i-1}$ is isomorphic to the field generated by $k_{i-1}$ and  $k_D$, and
hence has transcendence degree one over $k_{i-1}$ by Lemma~\ref{L:ac}.
The extension
$k_{i-1}(D)\to  k_i=k_{i-1}(C_i)$ is finite, since both fields have transcendence
degree one over $k_{i-1}$, and hence by the discussion of Proposition \ref{P:mor},
determines a cover $C_i\to D$ over $k_{i-1}$. If $C_i\not\iso D$, then $g(C_i)>g(D)\geq g$
by Remark~\ref{R:Hur}, contradicting the assumption that $g(C_i)\leq g$.
\end{proof}

We say that a general collection of curves $\mathcal C$ is \emph{non-covering},
if there is no cover relation between any two distinct curves
in $\mathcal C$. Immediately from Remark~\ref{R:Hur} we get:

\begin{lemma}\label{L:genusnoncov}
The collection  of all isomorphism classes of curves
over $k$ of a fixed genus $g\geq 2$ is non-covering.\qed
\end{lemma}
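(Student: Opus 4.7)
The plan is an immediate application of Remark \ref{R:Hur}. Let $\mathcal{C}$ denote the collection of all isomorphism classes of curves over $k$ of fixed genus $g \geq 2$. Suppose for contradiction that distinct curves $C, D \in \mathcal{C}$ satisfy the cover relation; without loss of generality, some non-constant morphism $C \to D$ is defined over a field extension $K \supseteq k$. Base change preserves both absolute irreducibility and genus, so this yields a cover $C_K \to D_K$ of curves over $K$, each of genus $g \geq 2$.

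Apply Remark \ref{R:Hur} to the cover $C_K \to D_K$: the alternative $g(C_K) > g(D_K)$ fails outright, since both genera equal $g$, so the remark forces $C_K \iso D_K$ over $K$. A standard descent argument upgrades this to $C_{\bar k} \iso D_{\bar k}$: the $k$-scheme $\operatorname{Isom}_k(C, D)$ is of finite type (since $C$ and $D$ are projective) and has a $K$-point by hypothesis, so it is nonempty and must have a closed point whose residue field is a finite extension of $k$ and hence embeds into $\bar k$; this yields the desired $\bar k$-isomorphism. We now contradict the defining property of a general collection, which prohibits distinct members from being $\bar k$-isomorphic.

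The only real obstacle is the descent step from $K$ to $\bar k$; the substantive algebraic content has already been packaged into Remark \ref{R:Hur}. One could alternatively bypass the $\operatorname{Isom}$-scheme formalism by observing that the isomorphism $C_K \to D_K$ is encoded by finitely many polynomials satisfying finitely many polynomial equations with coefficients in $k$ (hence in $\bar k$), and such a system having a solution in $K$ must have one in $\bar k$ by the Nullstellensatz. Either way, the argument is essentially a one-line corollary of Remark \ref{R:Hur}, which is consistent with the authors' decision to close the proof with a bare $\qed$.
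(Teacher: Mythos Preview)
Your argument is correct and matches the paper's approach: the authors simply declare the lemma to be immediate from Remark~\ref{R:Hur} and close with a bare \qed, exactly as you surmised. You have in fact been more careful than the paper by spelling out the descent from $K$ to $\bar k$, a step the authors leave implicit; your Isom-scheme (or Nullstellensatz) justification is standard and sound.
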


Note that the collection in Lemma~\ref{L:genusnoncov} is in one-one
correspondence with the set of $k$-rational points of the \emph{moduli space}
$\mathcal M_g$ of all curves of genus $g$. Recall that the moduli space has
dimension $3(g-1)$, for   $g\geq 2$.

\begin{theorem}\label{T:noncovcat}
Let $\mathcal C$ be a non-covering collection of curves over $k$, and let
$k(\mathcal C)$ be its function field, as in Proposition \ref{P:mem}.
For any curve $C$ in $\mathcal C$, any embedding of
its function field into $k(\mathcal C)$ has image equal to $k_C$.
In particular, if $P$ is a $k(\mathcal C)$-rational point on $C$ which
is not $k$-rational, then $P$ is $k_C$-rational and the natural inclusion
$k(P)\subseteq k_C$ is an equality.
\end{theorem}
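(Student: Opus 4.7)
The plan is to first localize any $k$-algebra embedding $\phi \colon k(C) \hookrightarrow k(\mathcal C)$ with image $L$ to a finite stage of the tower $k \subseteq k_1 \subseteq k_2 \subseteq \cdots$ exhausting $k(\mathcal C)$, mimicking the proof of Proposition~\ref{P:mem}, and then translate the embedding into a morphism of varieties and apply the rigidity lemma. Fix an enumeration $C_1, C_2, \ldots$ of $\mathcal C$ with $C = C_N$, and let $k_i := k_{C_1} \cdots k_{C_i}$, the function field of the product variety $X_i := C_1 \times \cdots \times C_i$. Let $n$ be minimal with $L \subseteq k_n$. Exactly as in Proposition~\ref{P:mem}, Lemma~\ref{L:ac} applied to $C_n$ over $k_{n-1}$, together with minimality of $n$, forces $k_{n-1} \cdot L$ to have transcendence degree one over $k_{n-1}$; hence $k_{n-1}(C) \iso k_{n-1} \cdot L \subseteq k_{n-1}(C_n) = k_n$ is a finite extension, corresponding via Proposition~\ref{P:mor} to a cover $C_n \to C$ over $k_{n-1}$. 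The non-covering hypothesis then forces $C_n \iso C$, so $n = N$, and Remark~\ref{R:Hur} upgrades the cover to an isomorphism, yielding $k_{N-1} \cdot L = k_N$.

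The core step --- and what I expect to be the main obstacle --- is promoting this equality to $L = k_C$. The inclusion $L \subseteq k_N = k(X_N)$ corresponds (by the valuative criterion of properness, since $X_N$ is smooth and $C$ is projective) to a morphism $f \colon X_N \to C$, and I want to show it factors through the projection $\pi_N \colon X_N \to C_N$. Base-changing to $\bar k$, for each $c_0 \in C_N(\bar k)$ the restriction $f_{\bar k}|_{X_{N-1, \bar k} \times \{c_0\}} \colon X_{N-1, \bar k} \to C_{\bar k}$ corresponds to a $K$-rational point of $C_{\bar k}$, where $K := \bar k \cdot k_{N-1}$ is the function field of $X_{N-1, \bar k}$. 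By Lemma~\ref{L:noncov} applied over $\bar k$ (non-covering descends to $\bar k$, since the original definition quantifies over all extensions $K \supseteq k$), this point is $\bar k$-rational, so the restriction is constant. The rigidity lemma then produces a factorization $f_{\bar k} = g_{\bar k} \circ \pi_{N, \bar k}$ for some morphism $g_{\bar k} \colon C_{N, \bar k} \to C_{\bar k}$; since $\pi_N$ is dominant and defined over $k$, the factor $g_{\bar k}$ is uniquely determined by the $k$-objects $f$ and $\pi_N$, and Galois descent therefore forces it to be defined over $k$, giving $f = g \circ \pi_N$. Finally, $g$ is non-constant (else $L \subseteq k$, contradicting $L \iso k(C)$), and so by Remark~\ref{R:Hur} it is an automorphism of $C$ over $k$; therefore $L = f^*(k(C)) = \pi_N^*(g^*(k(C))) = \pi_N^*(k(C)) = k_{C_N} = k_C$, as desired.

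For the ``in particular'' clause, if $P$ is a $k(\mathcal C)$-rational point of $C$ which is not $k$-rational, then $P$ is generic on $C$ by Lemma~\ref{L:ac}, so the associated morphism $\spec k(\mathcal C) \to C$ factors through the generic point of $C$ and induces a $k$-algebra embedding $k(C) \hookrightarrow k(\mathcal C)$ whose image is precisely $k(P)$. The first part of the theorem then gives $k(P) = k_C$, so $P$ is $k_C$-rational with $k(P) = k_C$.
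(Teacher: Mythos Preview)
Your argument is correct, but it is considerably more elaborate than the paper's. The paper's proof is essentially a single application of Lemma~\ref{L:noncov}, but with $k_C$ (rather than $k$ or $\bar k$) as the base field: since $k(\mathcal C)$ is generated over $k_C$ by the function fields $k_C(D)$ for $D\in\mathcal C\setminus\{C\}$, and none of these $D$ covers $C$, Lemma~\ref{L:noncov} gives $C(k(\mathcal C))=C(k_C)$ in one stroke; Proposition~\ref{P:gen2} then yields $k(P)=k_C$. No localization to a finite product, no rigidity, no descent. Your route instead interprets the embedding as a morphism $f\colon X_N\to C$ and factors it through the projection $\pi_N$ via the rigidity lemma, after checking fiberwise constancy by Lemma~\ref{L:noncov} over $\bar k$. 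That fiberwise step is really the same lemma the paper uses, just applied over a different base; had you applied it over $k_{C_N}$ to the curves $C_1,\dots,C_{N-1}$ you would get $C(k_N)=C(k_{C_N})$ directly and could skip rigidity and Galois descent. What your approach buys is a vivid geometric picture (the map visibly collapses the first $N-1$ factors); what it costs is several extra tools.

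One technical point: you justify extending the rational map $X_N\dashrightarrow C$ to a morphism by the valuative criterion. That criterion alone only gives definedness in codimension one. The reason the map extends everywhere is that $C$ has genus $\geq 2$, hence contains no rational curves: either embed $C$ in its Jacobian and invoke Weil's extension theorem for rational maps to abelian varieties, or resolve the indeterminacy and observe that the (rationally connected) exceptional locus must map to points. This is genuinely needed for your rigidity argument, so the justification should be tightened.
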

\begin{proof}
Let $K$ be the subfield of $k(\mathcal C)$ generated by all the function
fields $k_D$ with $D\in\mathcal C$ different from $C$.  Since $\mathcal C$
is non-covering, we can apply
Lemma~\ref{L:noncov} over the subfield $k_C\subseteq k(\mathcal C)$.
Since $k(\mathcal C)$ is generated by $K$ and $k_C$,
 Lemma~\ref{L:noncov} shows that $P$ is $k_C$-rational. As $P$ is not
$k$-rational, Proposition~\ref{P:gen2} then shows that $k(P)=k_C$, as we wanted
to show. To prove the first assertion, let $k(C)\to k(\mathcal C)$ be an
embedding, and let $P$ be the corresponding generic point of $C$ defined over $k(\mathcal
C)$. In particular, $k(P)$ is the image of the above embedding, and we just
argued that $k(P)=k_C$.
\end{proof}

\begin{corollary}
Let $\mathcal C$ be a non-covering collection of curves   over
$k$, and let  $k(\mathcal C)$ be its function field. Then the Galois group $G$
of $k(\mathcal C)$ over $k$ is equal to the direct product of the automorphism
groups of each curve in $\mathcal C$. In particular, if all curves in
$\mathcal C$ have genus at most $g$, then $G$ has exponent at most $84(g-1)$.
\end{corollary}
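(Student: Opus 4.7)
The plan is to construct an isomorphism $\rho: G \to \prod_{C \in \mathcal{C}} \operatorname{Aut}(C)$ by restricting each $k$-automorphism of $k(\mathcal{C})$ to the subfields $k_C$, and to deduce the exponent bound from the Hurwitz bound on automorphism groups of curves.

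For well-definedness: given $\sigma \in G$ and $C \in \mathcal{C}$, the restriction $\sigma|_{k_C}$ is a $k$-embedding of $k_C \cong k(C)$ into $k(\mathcal{C})$, so Theorem~\ref{T:noncovcat} forces its image to be $k_C$ itself. Thus $\sigma$ preserves each $k_C$ setwise and induces an element $\rho_C(\sigma) \in \operatorname{Aut}_k(k_C) \cong \operatorname{Aut}(C)$; the last identification uses Proposition~\ref{P:mor} together with Remark~\ref{R:Hur}, which says that every non-constant self-morphism of a curve of genus $\geq 2$ is an automorphism. Setting $\rho(\sigma) := (\rho_C(\sigma))_C$ yields a group homomorphism, and injectivity is immediate since $\bigcup_C k_C$ generates $k(\mathcal{C})$ over $k$.

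For surjectivity, enumerate $\mathcal{C} = \{C_1, C_2, \ldots\}$ and consider the ascending chain $k_i := k(C_1 \times_k \cdots \times_k C_i) = k_{C_1} \cdots k_{C_i}$, the equality being noted just before Proposition~\ref{P:mem}; its union is $k(\mathcal{C})$. Given $(\tau_C)_C \in \prod_C \operatorname{Aut}(C)$, the product self-automorphism $\tau_{C_1} \times \cdots \times \tau_{C_i}$ of $C_1 \times_k \cdots \times_k C_i$ induces a $k$-automorphism $\sigma_i$ of $k_i$ agreeing with $\tau_{C_j}^*$ on each factor $k_{C_j}$. These $\sigma_i$ are compatible with the inclusions $k_i \subseteq k_{i+1}$ and thus glue to a $\sigma \in G$ satisfying $\rho(\sigma) = (\tau_C)_C$. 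For the final claim, Remark~\ref{R:genpt} gives $|\operatorname{Aut}(C)| \leq 84(g(C)-1) \leq 84(g-1)$ for every $C \in \mathcal{C}$, so every element of each $\operatorname{Aut}(C)$ has order at most $84(g-1)$; transported coordinate-wise through $\rho$, this yields the advertised exponent bound.

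The main obstacle is really packaged inside Theorem~\ref{T:noncovcat}: without the non-covering hypothesis, a $k$-embedding $k_C \hookrightarrow k(\mathcal{C})$ could instead land inside the compositum of the other $k_D$'s, and the restriction map $\rho_C$ would not even be well-defined. Once that key control on embeddings is available, the remaining steps are formal consequences of the compatibility between fibre products of varieties and compositums of their function fields.
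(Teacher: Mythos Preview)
Your proof is correct and follows essentially the same approach as the paper: both invoke Theorem~\ref{T:noncovcat} to show that every $\sigma\in G$ stabilises each $k_C$, giving a restriction homomorphism $G\to\prod_C\operatorname{Aut}_k(k_C)\cong\prod_C\operatorname{Aut}(C)$, with injectivity coming from the fact that the $k_C$ generate $k(\mathcal C)$ and the exponent bound from the Hurwitz estimate in Remark~\ref{R:genpt}. Your surjectivity argument via the product automorphisms $\tau_{C_1}\times\cdots\times\tau_{C_i}$ on $C_1\times_k\cdots\times_k C_i$ is simply a more explicit version of what the paper records in one phrase by calling each restriction map $G\to H_C$ \emph{split}.
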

\begin{proof}
Let $C$ be a curve in $\mathcal C$ with function field $k_C \subseteq k(\mathcal C)$,
and let $g\in G$. Since $g(k_C)\iso k_C$, it must be equal
to it by Theorem~\ref{T:noncovcat}. In other words, the restriction of $g$ to
$k_C$ belongs to   the Galois group $H_C$ of   $k_C$ over $k$. In particular,
the restriction map induces a split group homomorphism $G\to H_C$. Since $k(\mathcal
C)$ is generated by all the $k_C$ with $C\in\mathcal C$, it follows that $G$
is isomorphic to the direct product of all $H_C$. Since $H_C\iso
\operatorname{Aut}_k(C)$ has order at most $84(g(C)-1)$, the result follows
(see Remark \ref{R:genpt}).
\end{proof}

\section{Construction of a computably categorical field of infinite
transcendence degree}
\label{sec:ccfields}

In this section, $Q$ is a (countable) computable field
of characteristic zero. Given a collection
$\C=\{ C_0,C_1,\ldots\}$ of curves $C_i$ over $Q$,
we say that $\C$ \emph{has the effective Mordell property}
(or \emph{is effectively Mordell}) if the values
$|C_i(Q)| < \infty$ are computable as a function of $i$.
Of course, this depends not only on $\C$, but also on the
ordering of the curves in $\C$.  Normally we assume a fixed
computable listing of these curves:  some computable function
$g(i)$ gives the defining equation of each $C_i$ over $Q$.
One might hope to compute each $|C_i(Q)|$ effectively
while still allowing $|C_i(Q)|=\aleph_0$,
but in practice we will deal with collections $\C$
with the basic \emph{Mordell property}, i.e., for
which all $|C_i(Q)|$ are finite.  In this case,
with a computable ground field $Q$ and a computable
listing of the curves in $\C$,
the effective Mordell property for $\C$ means
that we can computably determine
the entire set $C_i(Q)$ of solutions for each $i$, since
to do so we only need to be able to compute their number,
using the effectiveness of the Mordell property,
and then search for them all.  
Note that by Faltings's positive solution of the Mordell
Conjecture, each curve of genus two has only finitely many
solutions over a number field.

For our construction, we need a non-covering collection of curves
over the ground field $Q=\Q$ with the effective Mordell property.
For instance, we might in light of Lemma \ref{L:genusnoncov}
ask whether there are infinitely many curves of a fixed genus for which
we can effectively determine their $\mathbb Q$-rational points.
After all, a ``generic'' choice will produce a curve without any
$\mathbb Q$-rational points, a trivial instance of effective Mordell.
Although we do not know the answer to this, we can prove:

\begin{theorem}
\label{thm:Fermatcurves}
There exists a non-covering collection $\mathcal C$ of curves
over $\mathbb Q$ with the effective Mordell property.
\end{theorem}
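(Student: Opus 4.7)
The plan is to take $\mathcal{C}=\{D_{p_i}\}_{i\in\omega}$, where $D_p:X^p+Y^p=Z^p$ is the projective Fermat curve and $p_0<p_1<\cdots$ is a computable sequence of odd primes, all at least $5$, to be chosen. Each $D_{p_i}$ is then a smooth projective plane curve of genus $(p_i-1)(p_i-2)/2\geq 6\geq 2$, so this collection fits the ``general collection'' framework of Section~\ref{sec:AG}. Effective Mordell is immediate from Fermat's Last Theorem: for any odd prime $p\geq 3$, the set $D_p(\Q)$ consists of precisely the three trivial points $(1:0:1)$, $(0:1:1)$, $(1:-1:0)$, so $|D_{p_i}(\Q)|=3$ is (trivially) a computable function of $i$.

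It remains to arrange non-covering. By Remark~\ref{R:Hur}, any non-trivial cover $C\to D$ of curves of genus at least two satisfies $g(C)>g(D)$, so since $g(D_p)$ is strictly increasing in $p$, no $D_{p_i}$ can cover $D_{p_j}$ when $i<j$. Thus only the other direction must be ruled out: $D_{p_j}$ should not dominate $D_{p_i}$ for any $j>i$. The sequence can be built inductively: having defined $p_0<\cdots<p_{i-1}$, let $p_i$ be the least prime greater than $p_{i-1}$ such that $D_{p_i}$ does not dominate any of $D_{p_0},\ldots,D_{p_{i-1}}$. For this algorithm to produce a computable sequence, the crucial ingredient needed is a \emph{cover-bounding lemma}: for each fixed odd prime $p\geq 5$, the set of odd primes $q$ for which $D_q$ dominates $D_p$ (over $\Qbar$) is finite and effectively enumerable from $p$.

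This cover-bounding lemma is the main obstacle. The approach I would pursue exploits the rigid structure of the Jacobian $J(D_p)$: by classical work of Gross and Rohrlich, $J(D_p)$ decomposes up to isogeny into simple abelian-variety factors with complex multiplication by subfields of $\Q(\zeta_p)$, indexed by characters of $(\Z/p)^\times$. A non-constant cover $D_q\to D_p$ would induce, via Albanese functoriality, a surjection of Jacobians up to isogeny, forcing these CM factors to embed inside $J(D_q)$; for distinct primes $p\neq q$, the CM types available inside $J(D_q)$ are essentially incompatible with those tied to $\Q(\zeta_p)$, ruling out all but finitely many $q$ and, with care, yielding effective control. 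A more elementary alternative would combine Riemann--Hurwitz (Theorem~\ref{T:Hur}) with the known (large) automorphism group of $D_p$ and de~Franchis--style finiteness of covers of a fixed curve; turning such abstract finiteness into an \emph{effective} enumeration is where the real work lies.
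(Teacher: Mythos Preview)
Your overall framework is exactly the paper's: take Fermat curves $D_{p_i}$ for a computable increasing sequence of primes $p_i\geq 5$, get effective Mordell from Wiles, and rule out covers in the ``upward'' direction via the genus inequality of Remark~\ref{R:Hur}. The proposal is correct up to that point.

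The genuine gap is precisely where you flag it: your cover-bounding lemma is stated but not established, and neither of the approaches you sketch actually delivers what you need. De~Franchis--type finiteness is non-effective in general, so it cannot by itself yield a computable enumeration of the bad primes $q$, and you acknowledge this. The Jacobian/CM route via the Gross--Rohrlich decomposition is plausible in spirit, but turning ``the CM types in $J(D_q)$ are incompatible with those tied to $\Q(\zeta_p)$'' into a rigorous, effective exclusion of all but finitely many $q$ is a substantial project, and nothing in your sketch indicates how to carry it out or bound the exceptional set. As written, the proof stops at the key step.

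The paper closes this gap with a single external input: a lemma of Baker, Gonz\'alez-Jim\'enez, Gonz\'alez, and Poonen (Lemma~\ref{L:BGGP05}) which implies that if $C$ has genus $g\geq 2$ and $p>64g^2$ is prime, then $D_p$ cannot dominate $C$ (Lemma~\ref{L:nocovFermat}). This replaces your finiteness-plus-enumeration scheme with an explicit numerical bound, so the primes can simply be chosen to grow fast enough: set $p_0=5$ and take $p_{i+1}$ to be the least prime exceeding $(4(p_i-1)(p_i-2))^2=64\,g(D_{p_i})^2$. Then $p_{i+1}>64\,g(D_{p_j})^2$ for every $j\leq i$, so no $D_{p_{i+1}}$ covers any earlier $D_{p_j}$, and the sequence is manifestly computable. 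No per-pair cover-deciding is needed at all.
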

\begin{proof}
Our collection will consist of \emph{Fermat curves},
which is to say, plane curves $D_N$ with affine equation
$d_N(X,Y)=X^N+Y^N-1$. By Wiles's positive solution of
Fermat's Theorem, the set of
Fermat curves $D_N$ with $N\geq 3$ is effectively Mordell, since
the only $\mathbb Q$-rational points of such a curve
are $(1,0)$ and $(0,1)$ (and the point at infinity $(1:-1:0)$).
Moreover, the genus of $D_N$ is equal to $(N-1)(N-2)/2$.
So it remains to find a non-covering subset. To this end, we
will choose inductively a set of prime
exponents $p_0,p_1, \dots$ as follows. Let $p_0=5$, and choose
$p_{i+1}$ to be the least prime bigger than $(4(p_i-1)(p_i-2))^2$.
Now $D_{p_i}$ does not cover $D_{p_j}$ for $i>j$
by Lemma \ref{L:nocovFermat} below, and for $i<j$
by Remark \ref{R:Hur}.
\end{proof}

To prove the  lemma,  we need the following result from \cite[Lemma 9.3]{BGGP05},
using the explicit value $\text{gcd}(m!,N)$ for $N'$ which they give there.
\begin{lemma}[Baker, Gonz\'alez-Jim\'enez, Gonz\'alez \& Poonen]
\label{L:BGGP05}
Let $C$ be a curve of genus $g\geq 2$ and suppose $m>180$ has the property that $\phi(n)>8g$ for all $n>m$, where $\phi$ is  Euler's totient function. If $C$ is dominated by some Fermat curve $D_N$, then it is already dominated by the Fermat curve $D_{N'}$ where $N'=\text{gcd}(m!,N)$.
\end{lemma}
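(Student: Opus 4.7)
The plan is to go through the Jacobian decomposition of the Fermat curves, a classical construction due to Rohrlich and Koblitz--Rohrlich. The curve $D_N$ carries a natural action of $\mu_N \times \mu_N$ on affine coordinates by $(\zeta_1, \zeta_2) \cdot (x,y) = (\zeta_1 x, \zeta_2 y)$, which lifts to its Jacobian $J_N := \operatorname{Jac}(D_N)$, yielding an isogeny decomposition
\[
J_N \;\sim\; \prod_{[a,b,c]} A_{[a,b,c]}
\]
indexed by $(\mathbb{Z}/N)^\times$-orbits of triples $(a,b,c) \in (\mathbb{Z}/N)^3$ with $a+b+c \equiv 0$ and $a,b,c \not\equiv 0$. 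Each factor $A_{[a,b,c]}$ has complex multiplication by the cyclotomic field $\mathbb{Q}(\zeta_n)$, where the \emph{level} is $n := N/\gcd(N,a,b,c)$, and its dimension is a positive multiple of $\varphi(n)/2$.

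First I would observe that any dominant morphism $f : D_N \to C$ induces, via the Albanese functor, a surjection $f_* : J_N \twoheadrightarrow J(C)$. Hence $J(C)$ is isogenous to a product of certain factors $A_{[a,b,c]}$, with total dimension $g$. Next I would use the hypothesis to forbid large levels: if any such factor had level $n > m$, then
\[
\dim A_{[a,b,c]} \;\geq\; \varphi(n)/2 \;>\; 4g \;\geq\; g \;=\; \dim J(C),
\]
which is impossible. So every participating level $n$ satisfies $n \leq m$, whence $n \mid m!$; combined with $n \mid N$ this forces $n \mid N' := \gcd(m!, N)$. The extra slack between the required bound $\varphi(n) > 2g$ and the stronger hypothesis $\varphi(n) > 8g$ is absorbed by the need to handle multiplicities in $f_*$ and the Galois-conjugate packaging of the CM factors.

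The final and most delicate step is to convert this Jacobian-level conclusion into an actual dominant morphism of curves $D_{N'} \to C$. The natural projection $\pi : D_N \to D_{N'}$ given by $(x,y) \mapsto (x^{N/N'}, y^{N/N'})$ is Galois with group $H := \ker(\mu_N \times \mu_N \twoheadrightarrow \mu_{N'} \times \mu_{N'})$, and $H$ acts trivially on precisely those factors $A_{[a,b,c]}$ whose level divides $N'$ -- that is, exactly the factors supporting $J(C)$. I would then argue that $H$ acts trivially on the composition $D_N \xrightarrow{f} C \hookrightarrow J(C)$ (after choosing a base point), so that $f$ is $H$-invariant and hence descends through the quotient $D_N/H \cong D_{N'}$ to produce the desired morphism $D_{N'} \to C$. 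The principal obstacle is this descent from an isogeny statement about Jacobians to a genuine morphism of smooth projective curves; carrying it out rigorously requires the careful character-theoretic bookkeeping of \cite[Lemma 9.3]{BGGP05}, which also explains the specific auxiliary hypothesis $m > 180$ needed to dispose of small pathological levels.
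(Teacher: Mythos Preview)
The paper does not give its own proof of this lemma: it is quoted directly from \cite[Lemma~9.3]{BGGP05} and used as a black box. So there is no in-paper argument to compare your proposal against; you are effectively trying to reconstruct the Baker--Gonz\'alez-Jim\'enez--Gonz\'alez--Poonen proof.

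As a sketch of that external proof, your outline is broadly on the right track --- the CM decomposition of $J_N$ into pieces $A_{[a,b,c]}$ of level $n\mid N$ with $\dim A_{[a,b,c]}$ a multiple of $\varphi(n)/2$ is indeed the engine --- but the proposal has a genuine gap at exactly the point you flag as ``the principal obstacle.'' Your descent argument says that $H$ acts trivially on the factors of level dividing $N'$ and therefore on the composite $D_N\to C\hookrightarrow J(C)$, hence $f$ descends. But knowing that $J(C)$ is \emph{isogenous} to a product of low-level factors does not by itself tell you that $H$ acts trivially on $J(C)$ as a \emph{quotient} of $J_N$: the surjection $f_*\colon J_N\to J(C)$ need not a priori be the obvious projection onto those factors, and an arbitrary isogeny can scramble the $H$-action. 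Establishing that the kernel of $f_*$ really contains the high-level part (equivalently, that $f_*$ is $H$-equivariant with $H$ acting trivially on the target) is precisely the content you defer to ``the careful character-theoretic bookkeeping of \cite[Lemma~9.3]{BGGP05}'' --- which is circular, since that is the very statement you are proving. Your remark that the slack between $\varphi(n)>2g$ and $\varphi(n)>8g$ is ``absorbed by multiplicities'' is likewise a placeholder rather than an argument; in the actual proof the stronger bound and the condition $m>180$ are used to control the small-level factors and the automorphisms of $C$ that intervene in the descent.
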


\begin{lemma}\label{L:nocovFermat}
Let $C$ be a curve of genus $g\geq 2$ and let $D_p$ be
the Fermat curve of prime exponent $p$.
If $p>64g^2$, then there is no cover relation between $C$ and $D_p$.
\end{lemma}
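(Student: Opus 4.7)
The plan is to rule out the two possible covering relations separately, using Remark~\ref{R:Hur} for the easy direction and Lemma~\ref{L:BGGP05} for the harder one.

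For the direction $C \to D_p$: any such cover would, by Remark~\ref{R:Hur} together with the assumption $g \geq 2$, force either $C \iso D_p$ or the strict inequality $g = g(C) > g(D_p) = (p-1)(p-2)/2$. But $p > 64g^2$ with $g \geq 2$ yields $p \geq 257$, hence $g(D_p) \geq 32640$, dwarfing $g$; both alternatives fail, ruling out this direction.

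For the direction $D_p \to C$, the idea is to apply Lemma~\ref{L:BGGP05} with a threshold $m$ chosen so that $p > m$, which forces $N' := \gcd(m!, p)$ to equal $1$ since $p$ is prime. Then $D_{N'} = D_1$ is the line $X+Y = 1$, of genus $0$, and cannot dominate a curve of genus $\geq 2$ by the genus inequality of Remark~\ref{R:Hur}; this would contradict the conclusion of Lemma~\ref{L:BGGP05}. I would take $m := 64g^2$: since $g \geq 2$ we have $m \geq 256 > 180$, so the first hypothesis of the lemma is satisfied, and the second hypothesis $\phi(n) > 8g$ for all $n > m$ follows from the classical elementary bound $\phi(n) \geq \sqrt{n}$ valid for all $n > 6$, which gives $\phi(n) \geq \sqrt{n} > \sqrt{m} = 8g$ whenever $n > m$.

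The only genuine technical point is calibrating $m$ so that both hypotheses of Lemma~\ref{L:BGGP05} hold while keeping $m < p$; the specific constant $64g^2$ in the statement of the lemma is exactly what balances these two requirements, and once that is recognised the totient estimate and the primality of $p$ finish the argument almost immediately.
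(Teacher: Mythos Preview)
Your proof is correct and follows essentially the same approach as the paper's: both directions are handled exactly as you describe, using Remark~\ref{R:Hur} for $C\to D_p$ and Lemma~\ref{L:BGGP05} with $m=64g^2$ together with the totient bound $\phi(n)\geq\sqrt n$ for $D_p\to C$. Your write-up is in fact slightly more explicit than the paper's, which leaves implicit the step that $N'=\gcd(m!,p)=1$ and hence $D_{N'}=D_1$ has genus $0$.
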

\begin{proof}
Since $D_p$ has genus
$(p-1)(p-2)/2>g$, it cannot be covered by $C$ by Remark~\ref{R:Hur}. We verify that $m=64g^2$ satisfies the hypotheses of Lemma~\ref{L:BGGP05}. Let $n>64g^2$. 
By the well-known estimate for the Euler's totient function
$\phi(n)\geq \sqrt n$, we get $\phi(n)> 8g$. Since $p$ does not divide $(64 g^2)!$, there can be no covering $D_p\to C$.
\end{proof}

Fixing the sequence of primes $\langle p_s\rangle_{s\geq 0}$
defined in Theorem \ref{thm:Fermatcurves}, we begin with
the purely transcendental extension $\Q(x_0,x_1,\ldots)$,
and for each $s$, adjoin an element $y_s$ satisfying
the \mbox{$p_s$-th} Fermat curve $d_{p_s}(x_s,y_s)=x_s^{p_s}+y_s^{p_s}-1=0$.
We write $F_0=\Q$ and $F_{s+1}=F_s(x_s)[y_s]/(d_{p_s}(x_s,y_s))$,
so $F_{s+1}$ has transcendence degree $1$ over $F_s$,
and we may take each $F_s$ to be computable within $F$,
uniformly in $s$.
Clearly $F$ has infinite transcendence degree over $\Q$.

\begin{theorem}
\label{thm:Fermat}
The computable field $F$ built above is computably categorical
and has infinite transcendence degree.
\end{theorem}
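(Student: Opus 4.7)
The infinite transcendence degree is immediate, since $\langle x_i\rangle_{i\in\omega}$ is algebraically independent over $\Q$ by construction. For computable categoricity, let $\Ftilde$ be any computable field isomorphic to $F$; we must effectively produce an isomorphism $\phi:F\to\Ftilde$.

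The plan is to build $\phi$ generator by generator. For each $i$, search $\Ftilde$ for a pair $(a_i,b_i)\in\Ftilde^2$ with $a_i^{p_i}+b_i^{p_i}=1$ and $(a_i,b_i)\notin\{(0,1),(1,0)\}$. Such a pair exists because $\Ftilde\cong F$ and $(x_i,y_i)$ is a nontrivial solution in $F$; and all three conditions are decidable in the computable field $\Ftilde$, so the search terminates, uniformly in $i$. Declare $\phi(x_i)=a_i$ and $\phi(y_i)=b_i$.

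The essential point is that this assignment extends consistently. By Fermat's Last Theorem the only $\Q$-rational points of the affine Fermat curve $D_{p_i}$ are $(0,1)$ and $(1,0)$, so $(a_i,b_i)$ is a non-$\Q$-rational $\Ftilde$-point of $D_{p_i}$. Fix any abstract witness isomorphism $\Ftilde\cong F$, which identifies $\Ftilde$ with the function field of the non-covering collection $\mathcal{C}=\{D_{p_i}\}$; Theorem~\ref{T:noncovcat} then yields $\Q(a_i,b_i)=k_{D_{p_i}}\subseteq\Ftilde$. Thus the subfield of $\Ftilde$ generated by $(a_i,b_i)$ depends only on $i$ (not on the particular solution) and equals the image of $\Q(x_i,y_i)$ under the witness isomorphism. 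It follows that sending $x_i\mapsto a_i$, $y_i\mapsto b_i$ on generators defines a $\Q$-algebra map $\phi:F\to\Ftilde$: the Fermat relations are satisfied, and the $a_i$ are algebraically independent over $\Q$ because they lie in the tensor-independent subfields $k_{D_{p_i}}$. This $\phi$ is injective (being a field map), and surjective because its image contains every $k_{D_{p_i}}$, and these subfields generate $\Ftilde$.

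Computability of $\phi$ is then routine: every $z\in F$ is a rational expression in finitely many $x_i,y_i$, so one performs the (terminating) search for each relevant $(a_i,b_i)$ and evaluates the expression in $\Ftilde$ using the computable field operations. The main obstacle is algebro-geometric rather than computability-theoretic: one must know that \emph{every} nontrivial $\Ftilde$-solution of $d_{p_i}$ generates the same canonical subfield of $\Ftilde$, so that the search cannot return an inconsistent choice. This is exactly what Theorem~\ref{T:noncovcat} supplies, which in turn is why the widely-spaced prime exponents in Theorem~\ref{thm:Fermatcurves} (via Lemma~\ref{L:nocovFermat}) are essential; without the non-covering property, a freshly found solution could lie in an unrelated subfield and the naive substitution would fail to define a homomorphism.
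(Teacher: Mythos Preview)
Your proposal is correct and follows essentially the same approach as the paper: search $\Ftilde$ for a nontrivial solution to each Fermat equation $d_{p_i}$, send the generators $(x_i,y_i)$ there, and invoke Theorem~\ref{T:noncovcat} to guarantee that any such choice lands in the canonical subfield $k_{D_{p_i}}$, so the resulting map is a well-defined isomorphism onto $\Ftilde$. The paper organizes the construction stage by stage (extending an embedding of $F_{s-1}$ to one of $F_s$) rather than assigning all generators at once, but this is purely presentational.
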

\begin{proof}
Let $\Ftilde$ be any computable field isomorphic to $F$,
say via a noncomputable isomorphism $\psi:F\to\Ftilde$.  We
define our computable isomorphism $f$ on each $F_s$ in turn,
starting with the unique (and computable) embedding of the
rationals $F_0$ onto the prime subfield $\Ftilde_0$ of $\Ftilde$.
Given $f\res F_{s-1}$, search for any nonzero elements
$\xtilde$ and $\ytilde$ in $\Ftilde$ such that $d_{p_s}(\xtilde,\ytilde)=0$ there.
We must find them, since $F\cong\Ftilde$, and 
$(\tilde x,\tilde y)$ is then a generic point of $D_p$ over $\Q$.
So we define
$f(x_{s})=\xtilde$ and $f(y_{s})=\ytilde$, then extend this $f$
to the rest of $F_{s}$, which is generated over $F_{s-1}$
by these elements.  This gives a field embedding of $F_s$
into $\Ftilde$, whose image is shown by Theorem \ref{T:noncovcat}
to be uniquely determined as the subfield $\psi(F_s)$
(although the embedding itself may not equal $\psi$ on $F_s$).
So the union of these embeddings (for all $s$) must
map $F$ onto $\Ftilde$.
\end{proof}

One could begin with any prime $>3$ as $p_0$,
thereby building countably many non-isomorphic
computable, computably categorical fields of infinite
transcendence degree.  (Our use of Proposition \ref{P:gen2}
requires that $p_0\neq 3$, since the curves must
all have genus $\geq 2$.)  And by varying the choice of
the subsequent primes $p_1,p_2,\ldots$, one could get uncountably
many relatively computably categorical fields of infinite
transcendence degree, although of course only
countably many of them would be computably presentable.

Next we adapt Theorem \ref{thm:Fermat} to a more general setting.
\begin{theorem}
\label{thm:general}
Let $k$ be any finitely generated field of characteristic $0$,
and $\C$ any computably enumerable collection
of curves over $k$, all of genus $\geq 2$,
such that $\C$ is non-covering and effectively Mordell over $k$.  (In particular,
$C(k)$ is finite for all $C\in\C$.)
Then the function field $k(\C)$ is computably categorical.
\end{theorem}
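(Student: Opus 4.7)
The plan is to adapt the stage-by-stage construction from the proof of Theorem \ref{thm:Fermat}, replacing the role of $\Q$ by $k$ and the Fermat curves by the curves of $\C$. Using a computable enumeration $\C=\{C_0,C_1,\ldots\}$ (available by the c.e.\ hypothesis), I first build $F$ as $\bigcup_s F_s$, where $F_0=k$ is a computable subfield (since $k$ is finitely generated of characteristic zero) and $F_{s+1}$ is obtained from $F_s$ by adjoining the coordinates of a generic point of $C_s$.

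Given a computable $\Ftilde\cong F$, I will construct the computable isomorphism $f\colon F\to\Ftilde$ in stages. At the initial stage I fix generators $\alpha_1,\dots,\alpha_N$ for $k$ over $\Q$ together with their defining polynomial relations, and search in $\Ftilde$ for a tuple $(\tilde{\alpha}_1,\dots,\tilde{\alpha}_N)$ satisfying these relations and generating a subfield $\tilde{k}\subseteq\Ftilde$ isomorphic to $k$; such a tuple exists because $\Ftilde$ contains an isomorphic copy of $k$. At each later stage $s$, I invoke the effective Mordell property to compute the finite set $C_s(k)$; by the non-covering hypothesis and Lemma \ref{L:noncov} (applied with $C=C_s$ and the other curves of $\C$ playing the role of the $D_i$), we in fact have $C_s(F_s)=C_s(k)$, so the generic point of $C_s$ in $F_{s+1}$ is not $F_s$-rational. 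I translate $C_s(k)$ via the partial $f$ into $\Ftilde$ and search for a point $\tilde{P}$ on the $f$-translated curve $\tilde{C}_s$ in $\Ftilde$ which avoids the finite set $f(C_s(k))$. Setting $f$ to send the generic point of $C_s$ to $\tilde{P}$ extends $f$ well-definedly to $F_{s+1}$: by Proposition \ref{P:gen2}, the non-$\tilde{k}$-rationality of $\tilde{P}$ forces $\tilde{k}(\tilde{P})$ to be a copy of the function field $k(C_s)$. Surjectivity of the limit $f\colon F\to\Ftilde$ then follows from Theorem \ref{T:noncovcat} applied to $\Ftilde$ viewed as the function field of the non-covering collection $f(\C)$ over $\tilde{k}$: the image $f(F_{s+1})$ is forced to coincide with the unique copy of $k_{C_s}$ inside $\Ftilde$, and the union of these copies over all $s$ exhausts $\Ftilde$.

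The main obstacle is the ambiguity in the initial embedding of $k$: distinct tuples $(\tilde{\alpha}_i)$ realizing the defining relations will in general generate conjugate but non-equal copies of $k$ inside $\Ftilde$, and it is not immediate that an arbitrary choice of copy $\tilde{k}$ is compatible with extending to a full isomorphism. I would handle this by arguing that the whole inductive step can be performed relative to \emph{any} such $\tilde{k}$: the $f$-translated collection $f(\C)$ is itself a non-covering, effectively Mordell family of curves over $\tilde{k}$, and Theorem \ref{T:noncovcat} then applies uniformly to the extension $\Ftilde/\tilde{k}$, ensuring at each stage that the required non-$\tilde{k}$-rational point of $\tilde{C}_s$ is available in $\Ftilde$. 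In the special case where $k$ is algebraic over $\Q$, the ambiguity is even milder, since $\tilde{k}$ must equal the relative algebraic closure of $\Q$ in $\Ftilde$ and is therefore uniquely determined as a subfield, with only finitely many Galois choices of isomorphism $k\to\tilde{k}$ left to enumerate; the case of positive transcendence degree can then be reduced to the algebraic case by first adjoining a transcendence basis of $k$ through a preliminary application of the same kind of generic-point search used for the $C_s$.
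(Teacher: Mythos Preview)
Your stage-by-stage construction matches the paper's, but your treatment of the initial embedding $k\hookrightarrow\Ftilde$ has a genuine gap. Searching in $\Ftilde$ for a tuple $(\tilde{\alpha}_1,\dots,\tilde{\alpha}_N)$ satisfying the defining relations of $k$ is not enough: when $k$ has positive transcendence degree over $\Q$, such a tuple may satisfy \emph{additional} relations and hence generate a proper homomorphic image of the coordinate ring rather than a copy of $k$ (for instance, if $k=\Q(t)$ then the tuple $\tilde{\alpha}=2$ vacuously satisfies every polynomial relation that $t$ does). Checking that no extra relations hold is a $\Pi_1$ condition, not a $\Sigma_1$ one, so your search is not effective. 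Even granting that you land on a subfield $\tilde{k}\cong k$, your claim that ``the inductive step can be performed relative to any such $\tilde{k}$'' is unjustified: $\Ftilde$ need not be the function field of the $f_0$-translate of $\C$ over an \emph{arbitrary} copy $\tilde{k}$, so there is no guarantee that the non-$\tilde{k}$-rational points of $\tilde{C}_s$ that you need actually exist in $\Ftilde$, nor that the subfields they generate exhaust $\Ftilde$. The same problem persists when $k$ is a number field: although $\tilde{k}$ is then uniquely determined as a subset of $\Ftilde$, the particular isomorphism $k\to\tilde{k}$ you pick may fail to extend to $F\to\Ftilde$, since the collection $\C$ need not be invariant under $\operatorname{Aut}(k)$; and you give no way to detect computably that a wrong choice has been made.

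The paper sidesteps all of this with a standard device: fix once and for all a (possibly noncomputable) isomorphism $\alpha\colon F\to\Ftilde$, and set $\ktilde:=\alpha(k)$ and $f_0:=\alpha\res k$. Since $k$ is finitely generated, $f_0$ is completely determined by the finitely many values $\alpha(\alpha_1),\dots,\alpha(\alpha_N)$, and is therefore a computable function regardless of whether $\alpha$ is. Computable categoricity requires only the \emph{existence} of a computable isomorphism, so hard-coding this finite amount of noncomputable data is legitimate. With $f_0$ pinned down in this way the induction proceeds exactly as you outline, and the paper checks that the resulting $f$ is an isomorphism by showing that $f^{-1}\circ\alpha$ is an automorphism of $F$, via a short extension lemma (Lemma~\ref{lemma:ext}) that plays the role of your appeals to Proposition~\ref{P:gen2} and Theorem~\ref{T:noncovcat}.
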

Saying that $\C$ is computably enumerable (or \emph{c.e.}) means
that there is a computable function $g$ such that for every $s$,
$g(s)\in k[X,Y]$ is a polynomial defining a curve $C_s$,
and $C_0,C_1,\ldots$ is a list of all curves in $\C$
without repetitions (even up to isomorphism).
Below we write $g_s$ for $g(s)$.
\begin{pf}
First, since $k$ is finitely generated, it has a computable presentation.
Then, by enumerating the curves in $\C$ and adjoining to $k$
a generic solution for each, we may build a computable field which,
by Theorem \ref{T:noncovcat}, is isomorphic to $k(\C)$.
Thus $k(\C)$ is computably presentable.

Now let $F$ and $\Ftilde$ be computable fields isomorphic
to $k(\C)$.  The finitely generated field $k$ is
computably enumerable within $F$, and
for a fixed (not necessarily computable) isomorphism
$\alpha:F\to\Ftilde$, the image $\ktilde=\alpha(k)$
will likewise be c.e.\ within $\Ftilde$.
Moreover, the isomorphism $f_0=\alpha\res k$
from the subfield $F_0=k$ onto $\Ftilde_0=\ktilde$
is computable:  we need only know the images
under $\alpha$ of the finitely many generators of $k$.

As in Theorem \ref{thm:Fermat}, the construction is
straightforward.  For each $s$, using
the effective Mordell property, we can compute the
(finite) number $j$ of distinct solutions of $g_s$ in $k$.
Search until we have found all $j$ solutions of $g_s$
in $k$ and one additional solution $(x_s,y_s)$ of $g_s$ in $F$.
Then we search in $\ktilde$ for $j$ distinct solutions to
the polynomial $\gtilde_s (X,Y)\in\ktilde[X,Y]$
whose coefficients are the images of those of $g_s$ under $f_0$, and in $\Ftilde$
for one additional solution $(\xtilde_s,\ytilde_s)$ to $\gtilde_s$.
We must find such a pair $(\xtilde_s,\ytilde_s)$, and we set
$f(x_s)=\xtilde_s$ and $f(y_s)=\ytilde_s$.
It is then easy to compute $f$ on all of $F$,
since Theorem \ref{T:noncovcat} shows $F$
to be generated by $\set{x_s,y_s}{s\in\omega}$.
For any $z\in F$, we compute $f(z)$ just by searching
for an $n$ and an $h\in k(X_1,\ldots,X_n,Y_1,\ldots,Y_n)$ with
$h(x_1,\ldots,x_n,y_1,\ldots,y_n)=z$, since then
$f(z)=h(\xtilde_1,\ldots,\xtilde_n,\ytilde_1,\ldots,\ytilde_n)$.

To see that this $f$ really is an isomorphism,
we appeal to the following lemma.
In Theorem \ref{thm:Fermat}, the corresponding
fact is immediate (and appears explicitly in the discussion
of the proof of Proposition \ref{prop:solns} below).
Now in this more general
situation, we make sure that it is safe to choose
an arbitrary solution
$(\xtilde_s,\ytilde_s)$ of $\gtilde_s$ in $(\Ftilde-\ktilde)$.

\begin{lemma}
\label{lemma:ext}
For every $s$ and any pairs $(x,y), (x',y')\in (F_{s+1}-F_s)^2$
with $g_s(x,y)=g_s(x',y')=0$,
every automorphism $\psi$ of $F_s$ which fixes
$k$ pointwise extends to a unique automorphism of $F_{s+1}$
which maps $x$ to $x'$ and $y$ to $y'$.
\end{lemma}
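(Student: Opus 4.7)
The plan is to construct $\Psi$ explicitly by viewing $F_{s+1}$ as the fraction field of a quotient of a polynomial ring over $F_s$ and defining $\Psi$ on generators. The key structural observation is that both $(x,y)$ and $(x',y')$ are generic points of the curve $C_s$ over $F_s$. Since $C_s$ has genus $\geq 2$ by the standing hypothesis on $\mathcal{C}$, and this persists after base change to $F_s$ (as noted in Section \ref{sec:AG}, where it is stated that base change preserves absolute irreducibility and does not alter the genus), Proposition \ref{P:gen2} applied over $F_s$ tells us that any $F_{s+1}$-rational point of $C_s$ which is not $F_s$-rational must generate the whole function field $F_s(C_s) = F_{s+1}$ over $F_s$. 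The hypothesis $(x,y) \in (F_{s+1}-F_s)^2$ guarantees in particular that $(x,y) \notin C_s(F_s)$, so $F_s(x,y) = F_{s+1}$, and likewise $F_s(x',y') = F_{s+1}$.

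For existence, because $g_s \in k[X,Y]$ and $\psi$ fixes $k$ pointwise, the ring homomorphism $\Phi\colon F_s[X,Y] \to F_{s+1}$ defined by $a \mapsto \psi(a)$ for $a\in F_s$, $X \mapsto x'$, $Y \mapsto y'$ sends $g_s$ to $g_s(x',y')=0$ and so descends to $\bar\Phi\colon F_s[X,Y]/(g_s) \to F_{s+1}$. The domain here is a one-dimensional integral domain (using that $g_s$ remains irreducible over $F_s$), so the kernel of $\bar\Phi$ is either $(0)$ or maximal; a maximal kernel would force $x', y'$ to be algebraic over $\psi(F_s) = F_s$, but Lemma \ref{L:ac} says $F_s$ is algebraically closed inside $F_{s+1} = F_s(C_s)$, contradicting $x',y' \notin F_s$. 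Hence $\bar\Phi$ is injective and extends to a field embedding $\Psi\colon F_{s+1} \to F_{s+1}$ with $\Psi|_{F_s}=\psi$, $\Psi(x)=x'$, $\Psi(y)=y'$; it is surjective because its image contains $F_s(x',y') = F_{s+1}$, hence is the desired automorphism.

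Uniqueness is immediate, since any endomorphism of $F_{s+1}$ is determined by its values on the generating set $F_s \cup \{x,y\}$, and all three conditions on $\Psi$ pin these down. The substantive geometric input is the genus hypothesis enabling Proposition \ref{P:gen2}; once that is in hand, the construction is purely formal and relies only on the fact that $g_s$ has coefficients in the fixed field $k$ of $\psi$. The one subtle step is the appeal to Lemma \ref{L:ac} to rule out the maximal-kernel case and thereby certify that $\bar\Phi$ is injective.
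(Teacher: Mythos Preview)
Your proof is correct and follows essentially the same approach as the paper's: both use Lemma~\ref{L:ac} to see that a non-$F_s$-rational solution is generic, both use absolute irreducibility of $g_s$ over $F_s$, both rely on $\psi$ fixing the coefficients of $g_s$, and both conclude surjectivity from $F_s(x,y)=F_s(x',y')=F_{s+1}$. The only cosmetic differences are that the paper builds the extension in two stages (first $F_s(x)\cong F_s(x')$ via transcendence, then adjoin $y$ via irreducibility of $g_s(x,Y)$) whereas you work directly with the coordinate ring $F_s[X,Y]/(g_s)$ and a Krull-dimension argument, and the paper cites Theorem~\ref{T:noncovcat} for the generation statement while you invoke Proposition~\ref{P:gen2} over $F_s$ directly---arguably the cleaner citation here.
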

\begin{pf}
By Lemma \ref{L:ac}, $F_s$ is algebraically closed
within $F_s(C_s)=F_{s+1}$, so each of $x$ and $x'$ must be transcendental
over $F_s$.  Therefore $\psi$ extends to a partial automorphism
$\psi'$ of $F_{s+1}$ by mapping $x$ to $x'$.
Next, by absolute irreducibility, $g_s(x,Y)$ is irreducible
in $F_s(x)[Y]$, and likewise for $x'$, so
$$F_s(x,y)\cong F_s(x)[Y]/(g_s(x,Y)) \cong F_s(x')[Y]/(g_s(x',Y)) \cong F_s(x',y'),$$
with the middle isomorphism being induced by $\psi'$
on the quotients of the polynomial rings.
(It is important here that $\psi'$ fixes the coefficients of $g_s(X,Y)$.)
But by Theorem \ref{T:noncovcat}, $x$ and $y$
together generate $F_{s+1}$ over $F_s$, as do $x'$ and $y'$,
which makes it clear both that we have an automorphism of $F_{s+1}$,
and that it is unique.
\qed\end{pf}

Our claim that the $f=\cup_s f_s$ constructed above is an isomorphism
will follow from $(f^{-1}\circ\alpha)$ being an automorphism of $F$,
where $\alpha$ was the given isomorphism from $F$ to $\Ftilde$.
Since $f_0=\alpha\res k$, we know that $(f_0^{-1}\circ\alpha)$ is the identity
on $k=F_0$.  For each $s$, $(f^{-1}\circ\alpha)\res F_{s+1}$
is just the unique extension of $\psi=(f^{-1}\circ\alpha)\res F_s$
given by the lemma,
with $x=\alpha^{-1}(\xtilde_s)$, $y=\alpha^{-1}(\ytilde_s)$,
$x'= x_s$, and $y'=y_s$.
\qed\end{pf}

We offer a criterion for computable enumerability
of certain collections $\C$ of curves.
\begin{theorem}
\label{thm:cecurves}%
Let $g\geq 2$,  let $Q$ be a computable field, and let $\mathcal C$ be an
infinite subset (indexed by natural numbers)
of the $Q$-rational points  of the moduli space $\mathcal M_g$. Suppose for
each index of a $Q$-rational
point of the moduli space $\mathcal M_g$, we can effectively compute a
defining set of equations over $Q$ of an affine model of the curve of genus
$g$ determined by this $Q$-rational point. Then  relative to a
computable representation of the field extension $Q\subseteq Q(\mathcal C)$,
the subset $\mathcal C$ is computably enumerable inside $\mathcal M_g(Q)$.
\end{theorem}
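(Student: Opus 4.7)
The plan is, given an index $i$ of a $Q$-rational point of $\mathcal M_g$, to semi-decide whether the corresponding genus-$g$ curve $C_i$ lies in $\mathcal C$. The key reduction I would use is Proposition \ref{P:mem}: since $C_i$ and every member of $\mathcal C$ has genus exactly $g$, membership $C_i\in\mathcal C$ is equivalent to the existence of an embedding of function fields $Q(C_i)\hookrightarrow Q(\mathcal C)$, which by Proposition \ref{P:mor} is in turn the same datum as a $Q(\mathcal C)$-rational generic point of $C_i$.

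First I would use the hypothesis to compute from $i$ a defining set of equations $f_{i,1},\dots,f_{i,m}\in Q[X_1,\dots,X_n]$ cutting out an affine model of $C_i$. Next, since $Q(\mathcal C)$ is the directed union of the function fields of finite products $\prod_{C\in\mathcal C'}C$ (each an absolutely irreducible variety), Lemma \ref{L:ac} applied to these products shows that $Q$ is algebraically closed inside $Q(\mathcal C)$. Consequently, every $Q(\mathcal C)$-rational point of $C_i$ is either $Q$-rational or generic, so detecting a generic point reduces to finding a common solution of the $f_{i,j}$ in $Q(\mathcal C)^n$ with at least one coordinate outside $Q$.

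The enumeration procedure is then a dovetailed search: for each $i$, systematically enumerate $n$-tuples $(x_1,\dots,x_n)\in Q(\mathcal C)^n$; test each for satisfying all $f_{i,j}$, which is decidable in the computable field $Q(\mathcal C)$; and test whether some $x_k$ lies outside $Q$. On finding a witness, output $i$. Completeness: if $C_i\in\mathcal C$, the generic point of $C_i$ supplied by Proposition \ref{P:mem} sits in $Q(\mathcal C)^n$, and will eventually be enumerated. Soundness: a witness $(x_1,\dots,x_n)$ gives a non-$Q$-rational, hence generic, point of $C_i$ over $Q(\mathcal C)$, determining an embedding $Q(C_i)\hookrightarrow Q(\mathcal C)$, and Proposition \ref{P:mem} then places $C_i$ in $\mathcal C$.

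The main obstacle I anticipate is justifying the effectivity of the test ``some $x_k\notin Q$''. This requires $Q$ to be decidable, not merely computably enumerable, inside $Q(\mathcal C)$. For the computable representations of $Q(\mathcal C)$ arising naturally in this paper---in particular the one built in the proof of Theorem \ref{thm:general} by enumerating $\mathcal C$ and successively adjoining generic solutions of the defining equations of its curves---this decidability is automatic, since every element of $Q(\mathcal C)$ is stored as a rational expression in finitely many named generators over a computable presentation of $Q$, and recognizing those expressions that reduce to elements of $Q$ is effective. I would interpret the hypothesis ``computable representation of the field extension $Q\subseteq Q(\mathcal C)$'' as supplying exactly this property.
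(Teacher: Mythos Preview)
Your proposal is correct and follows essentially the same approach as the paper: compute defining equations of an affine model of the candidate curve and search $Q(\mathcal C)$ for a solution not lying in $Q$, invoking Proposition~\ref{P:mem} to conclude. If anything, you are more careful than the paper, which leaves implicit both the appeal to Lemma~\ref{L:ac} (to see that a non-$Q$-rational solution is generic) and the decidability of $Q$ inside $Q(\mathcal C)$ that you rightly flag.
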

\begin{proof}
Let $C$ be a $Q$-rational point of
$\mathcal M_g$, viewed as a curve over $Q$ of genus $g$, and let
$f_1,\dots,f_s\in Q[x_1,\dots,x_m]$ be the defining
equations of an affine model of $C$ (meaning that
the function field of $C$ is the field of fractions of
$Q[x_1,\dots,x_m]/(f_1,\dots,f_s)Q[x_1,\dots,x_m]$).
We now search for a solution $P\in Q(\mathcal C)^m$ of the system of
equations $f_1=\dots=f_s=0$  which is not defined over the computable subfield
$Q$.
If such a solution exists, then $C\in\mathcal C$ by Proposition~\ref{P:mem}.
\end{proof}

\begin{ex}
For instance, if $g=2$, any curve $C$ of genus $2$ has an affine model $C_f$
given by an equation $y^2=f(x)$ with $f$ a polynomial of degree $6$ without
double roots (so that in particular  $C_f$ is non-singular whence isomorphic to an
open subset of $C$).
Moreover, two such models $C_{f_1}$ and $C_{f_2}$ are
birational to the same curve if and only if $f_1$ and $f_2$ are equal up to a
fractional linear transformation, that is to say, if and only if
$$
f_1(x)=(cx+d)^6\cdot f_2\left(\frac{ax+b}{cx+d}\right)
$$
for some $a,b,c,d\in Q$ with $ad-bc\neq 0$ (see for instance \cite[\S1.1]{CF96}).
Put differently, the subset $\mathbb H$ in the Hilbert scheme $\mathbb P_Q^7$ of
degree six polynomials without a double root admits a natural  action of the
group of fractional linear transformations $\Gamma:=\text{PGL}(2,Q)$, and the
geometric quotient $\mathbb H//\Gamma$ is the moduli space $\mathcal M_2$.
\end{ex}

\section{An Intrinsically Computable Transcendence Basis}
\label{sec:basis}

The field $F$ built in Theorem \ref{thm:Fermat}
has uncountably many automorphisms, as one sees
by defining, for any $S\subseteq\omega$, an automorphism
extending the map
$$ x_i \mapsto\left\{\begin{array}{cl}
x_i, & \text{if~}i\in S;\\
y_i, & \text{if~}i\notin S.
\end{array}\right.$$
Therefore, between $F$ and any computable field $\Ftilde\cong F$,
there exist $2^\omega$-many isomorphisms.  The point of
Theorem \ref{thm:Fermat} was that at least one must
be computable, but the image of the transcendence basis
$B=\set{x_i}{i\in\omega}$ can have arbitrarily high Turing degree
as well:  its image under the above automorphism is Turing-equivalent
to $S$.  In the language of computable model theory,
this says that $B$ is far from being \emph{intrinsically
computable}.
\begin{defn}
\label{defn:intrinsicallycomputable}
Let $\M$ be any computable structure, and $R$ an $n$-ary
relation on $\M$ (generally not in the signature of $\M$).
The \emph{degree spectrum of $R$ on $\M$} is the set
$$ \DS{\M}{R}=\set{\deg(f(R))}
{\B\cong\M\text{~via~}f~\&~\B\text{~is a computable structure}}.$$
If $\DS{\M}{R}=\{\bfz\}$, then $R$
is \emph{intrinsically computable}.
\end{defn}
So the degree spectrum measures the amount of complexity
that can be added to (or withheld from) $R$ under isomorphisms
onto other computable structures.  If that amount is bounded
above (or below), then we think of $R$ as being intrinsically
no more complex than (or no less complex than) that bound.
Degree spectra of relations have been studied widely in computable
model theory; see \cite{H98} for a survey of results.  With our field $F$,
the degree spectrum of the basis $B$, viewed as a unary relation
on $F$, contains every Turing degree, simply by the argument
above using automorphisms from $F$ to itself.

In this section we show that we can go to the opposite extreme:
an infinite transcendence basis for a computable field can be
intrinsically computable.  This would be trivial if the basis were finite,
of course, but for the infinite case, we believe this is the
first proof of Theorem \ref{thm:intrinsicallycomputable}, and more
generally the first proof of Corollary \ref{cor:intrinsicallycomputable}
(except insofar as the corollary already follows from the proof of Theorem \ref{thm:Fermat}:
every computably categorical field with an infinite computable transcendence basis
instantiates the corollary).

\begin{theorem}
\label{thm:intrinsicallycomputable}
There exists a computable field $F$, of infinite transcendence degree
over its prime field $\Q$, with an intrinsically computable
transcendence basis.
\end{theorem}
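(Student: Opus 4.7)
The plan is to reuse the same computable field $F$ constructed in Theorem \ref{thm:Fermat}, but to choose a different transcendence basis, namely
$$B' := \{\,x_s + y_s : s \in \omega\,\} \subseteq F.$$
The motivation is that the failure of intrinsic computability for the original basis $\{x_s\}$ stems entirely from automorphisms of $F$ that swap $x_s \leftrightarrow y_s$ independently at each index; the symmetric combination $x_s+y_s$ is fixed by every such swap, and this symmetry is exactly what will let me convert invariance into intrinsic computability via categoricity.

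The first step I would carry out is to verify that $B'$ really is a transcendence basis of $F$ over $\Q$. Writing $c_s:=x_s+y_s$, each $c_s$ is transcendental over $\Q$ (otherwise $x_s^{p_s}+(c_s-x_s)^{p_s}=1$ would give a nontrivial algebraic relation for $x_s$ over $\Qbar$, which routinely fails for $p_s$ an odd prime); the $c_s$ are algebraically independent since they lie in transcendence-disjoint pieces $\Q(x_s,y_s)$; and the same polynomial identity shows $x_s$ is algebraic of degree at most $p_s$ over $\Q(c_s)$, so $F$ is algebraic over $\Q(B')$.

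The second, and main, step is to identify $\operatorname{Aut}(F)$ completely. Any field automorphism $\sigma$ of $F$ fixes the prime field $\Q$ pointwise, so $(\sigma(x_s),\sigma(y_s))$ must be an $F$-rational point of the Fermat curve $D_{p_s}$ with $\sigma(x_s)$ transcendental. By Theorem \ref{T:noncovcat}, any non-$\Q$-rational $F$-rational point of $D_{p_s}$ is already defined over $\Q(x_s,y_s)$, and by Remark \ref{R:genpt} the generic points of $D_{p_s}$ in its own function field correspond bijectively to $\operatorname{Aut}_{\Q}(D_{p_s})$. For odd prime $p$ this automorphism group is only $\{\mathrm{id},\mathrm{swap}\}$, since the remaining automorphisms $(x,y)\mapsto(\zeta x,y)$ etc.\ would require $p$-th roots of unity not present in $\Q$. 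Hence $\sigma$ restricted to each pair $\{x_s,y_s\}$ is the identity or the swap, and $\operatorname{Aut}(F)\iso\prod_s \Z/2\Z$. Crucially, every such $\sigma$ fixes $B'$ pointwise.

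Finally, I would convert this invariance into intrinsic computability. The set $B'$ is decidable in $F$: any $z\in F$ lies in some least $F_n$, and $z\in B'$ can hold only for $z=c_{n-1}$, which is a finite check against the normal form. Given any computable $\tilde F\cong F$ via any (possibly non-computable) isomorphism $\psi:F\to\tilde F$, Theorem \ref{thm:Fermat} produces some computable isomorphism $g:F\to\tilde F$. Then $\sigma:=g^{-1}\circ\psi$ is an automorphism of $F$, so $\sigma(B')=B'$ by the previous step, whence $\psi(B')=g(B')$. Since $g$ is a computable bijection with computable inverse and $B'$ is decidable in $F$, the set $g(B')$ is decidable in $\tilde F$, so $\psi(B')$ has Turing degree $\bfz$ in $\tilde F$. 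I expect the hard part to be Step 2, specifically ruling out exotic automorphisms that would send $x_s$ to some element of $F$ outside $\{x_s,y_s\}$; this is overcome by the rigidity provided by Theorem \ref{T:noncovcat}, which forces any generic $F$-rational point of $D_{p_s}$ to live already in the small subfield $\Q(x_s,y_s)$ and hence to be counted by $|\operatorname{Aut}_{\Q}(D_{p_s})|=2$, the very same mechanism that powered Theorem \ref{thm:Fermat}.
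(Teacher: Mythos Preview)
Your overall strategy—choose a basis fixed pointwise by every automorphism of $F$, then use computable categoricity to transport decidability—is exactly the paper's idea. The gap is in Step~2: your computation of $\operatorname{Aut}_\Q(D_{p_s})$ is wrong. Over $\Q$ the automorphism group of the projective Fermat curve $X^p+Y^p+Z^p=0$ is $S_3$, not $\Z/2\Z$; Tzermias and Leopoldt show it is $S_3\ltimes(\mu(p))^2$ over $\bar\Q$, and only the $\mu(p)$ factor collapses over $\Q$. Concretely, the cyclic permutation $(X:Y:Z)\mapsto(Z:X:Y)$ induces the affine automorphism $(x,y)\mapsto(1/x,\,-y/x)$, which sends $x_s+y_s$ to $(1-y_s)/x_s\neq x_s+y_s$. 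So $B'=\{x_s+y_s\}$ is \emph{not} fixed by $\operatorname{Aut}(F)$, and your Step~3 does not apply to it. (The corresponding six nontrivial solutions of $d_{p_s}$ in $\Q(x_s,y_s)$ are listed in Corollary~\ref{cor:sixsolns}.)

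The paper repairs exactly this point by taking $z_i$ to be the sum of the first coordinates of \emph{all six} nontrivial solutions of $d_{p_i}$ in $F$, i.e.\ $z_i=x_i+y_i+\tfrac1{x_i}+\tfrac1{y_i}-\tfrac{x_i}{y_i}-\tfrac{y_i}{x_i}$, which is visibly $S_3$-invariant. This also yields a uniform existential definition of each $z_i$ (``the sum of the six nonzero, non-one elements that occur as first coordinate of a solution of $d_{p_i}$''), so the image of $\{z_i\}$ in any computable copy is c.e.\ and hence, by Lemma~\ref{lemma:cebasis}, computable. Your argument becomes correct if you replace $x_s+y_s$ by this $S_3$-symmetrized sum; the rest of your outline (that $\{z_i\}$ is a transcendence basis, and the categoricity transport in Step~3) goes through unchanged.
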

\begin{corollary}
\label{cor:intrinsicallycomputable}.
There exists a computable field $F$, of infinite transcendence degree
over its prime field $\Q$, such that every computable field isomorphic
to $F$ has a computable transcendence basis.
\end{corollary}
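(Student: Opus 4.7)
The plan is to deduce this corollary immediately from Theorem~\ref{thm:intrinsicallycomputable}; once that theorem is in hand, essentially no further work is required. I would take $F$ to be the computable field of infinite transcendence degree over $\Q$ produced by that theorem, and let $B\subseteq F$ be the transcendence basis it furnishes, which is intrinsically computable in the sense of Definition~\ref{defn:intrinsicallycomputable}.

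Next, given an arbitrary computable field $\Ftilde$ isomorphic to $F$, I would fix any isomorphism $f\colon F\to\Ftilde$ (not assumed computable, and in general unlikely to be so). By the intrinsic computability of $B$, the degree $\deg(f(B))$ equals $\bfz$, so the image $f(B)$ is a computable subset of the domain of $\Ftilde$.

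Finally, I would observe that $f(B)$ is automatically a transcendence basis of $\Ftilde$ over $\Q$. This is a purely algebraic transfer: any field isomorphism fixes the prime subfield $\Q$ pointwise and preserves both algebraic independence and algebraic dependence over subfields. Since $B$ is algebraically independent over $\Q$ in $F$ and $F$ is algebraic over $\Q(B)$, the image $f(B)$ is algebraically independent over $\Q$ in $\Ftilde$ and $\Ftilde$ is algebraic over $\Q(f(B))$. Hence $f(B)$ is a computable transcendence basis of $\Ftilde$, as desired. The main obstacle is thus entirely absorbed into the proof of Theorem~\ref{thm:intrinsicallycomputable}; as a sanity check, one also sees the conclusion via Theorem~\ref{thm:Fermat}, whose computably categorical field carries the obvious computable transcendence basis $\{x_i:i\in\omega\}$, and for any computable $\Ftilde\cong F$ one may simply take $f$ to be a \emph{computable} isomorphism, in which case $f(\{x_i:i\in\omega\})$ is manifestly a computable transcendence basis of $\Ftilde$.
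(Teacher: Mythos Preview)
Your argument is correct and matches the paper's treatment: the corollary is stated immediately after Theorem~\ref{thm:intrinsicallycomputable} with no separate proof, since the image of an intrinsically computable basis under any isomorphism onto a computable copy is computable and is again a transcendence basis. Your parenthetical ``sanity check'' via Theorem~\ref{thm:Fermat} is exactly the alternative route the paper itself flags just before stating the theorem---every computably categorical field with a computable transcendence basis already witnesses the corollary---so you have reproduced both observations the authors make.
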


The field $F$ in the theorem
will be the computable field defined in Theorem \ref{thm:Fermat},
and the key to the proof is knowing the exact number of solutions
in $F$ to each of the Fermat polynomials $d_{p_i}(X,Y)$
used to identify transcendence basis elements.  Therefore,
we need the following essential result of Tzermias and Leopoldt.

\begin{thm}[Tzermias \cite{T95}; Leopoldt \cite{L96}]
\label{thm:Tzermias}
Over an algebraically closed field $K$
of characteristic $0$, the automorphism
group of the projective curve $X^p+Y^p+Z^p$
is the semidirect product of the symmetric group
$S_3$ and the group $(\mu(p))^2$, where $\mu(p)$
is the multiplicative group of $p$-th roots
of unity in $K$.
\qed\end{thm}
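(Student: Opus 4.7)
The obvious automorphisms coming from projective linear transformations of $\mathbb P^2$ are of two kinds: the six permutations of $(X,Y,Z)$, giving an $S_3$; and the scalings $(X,Y,Z)\mapsto(\zeta_1X,\zeta_2Y,\zeta_3Z)$ with $\zeta_i\in\mu(p)$, which give $(\mu(p))^3/\mu(p)\iso(\mu(p))^2$ in $PGL_3(K)$ after quotienting out the central diagonal. Together these generate a copy of $(\mu(p))^2\rtimes S_3$ inside $\operatorname{Aut}(F_p)$, so the whole task is to rule out any further automorphisms. My plan is to split this into two steps: first, show every automorphism of $F_p$ arises from a projective linear transformation of the ambient $\mathbb P^2$; second, classify such linear transformations preserving the Fermat equation.

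For the first step, $F_p$ is a smooth plane curve of degree $p\geq 5$, so by adjunction $\omega_{F_p}\iso\mathcal O_{F_p}(p-3)$; its genus $g=(p-1)(p-2)/2\geq 6$ is at least two, and the curve is non-hyperelliptic. I would invoke the classical theorem that the hyperplane linear series is the unique $g^2_p$ on any smooth plane curve of degree $\geq 4$. Since every automorphism of $F_p$ preserves the canonical class, this uniqueness forces it to also preserve $[\mathcal O_{F_p}(1)]\in\operatorname{Pic}(F_p)$. Hence each $\phi\in\operatorname{Aut}(F_p)$ acts linearly on $H^0(F_p,\mathcal O(1))\iso K^3$, realizing $\phi$ as an element of $PGL_3(K)$ that stabilizes the Fermat equation.

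For the second step, take $A=(a_{ij})\in GL_3(K)$ with $(AX)^p+(AY)^p+(AZ)^p=\lambda(X^p+Y^p+Z^p)$. Expanding via the multinomial theorem and equating coefficients, each mixed monomial $X^iY^jZ^k$ with $i+j+k=p$ and at least two of $i,j,k$ positive yields $\sum_{r=1}^3 a_{r1}^i a_{r2}^j a_{r3}^k=0$. Specializing to the relations with $k=0$ and $(i,j)=(p-1,1),(p-2,2),\ldots,(1,p-1)$ gives, under the substitution $t_r=a_{r2}/a_{r1}$ (on rows where $a_{r1}\neq 0$), a Vandermonde-type system $\sum_r a_{r1}^p\,t_r^k=0$ in the $t_r$; for $p\geq 4$ this is overdetermined in characteristic zero, and a short case analysis forces columns $1$ and $2$ of $A$ to have disjoint support among the rows. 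Applying the same argument to the $(1,3)$ and $(2,3)$ column pairs and using $A\in GL_3$, one deduces that $A$ is a monomial matrix. Substituting back into the invariance equation then forces each nonzero entry to be a $p$-th root of unity (sharing a global projective scalar), recovering exactly $(\mu(p))^2\rtimes S_3$.

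\textbf{Main obstacle.} The harder step is the first, $\operatorname{Aut}(F_p)\hookrightarrow PGL_3(K)$: this rests on the uniqueness of the $g^2_p$ on a smooth plane curve of degree $\geq 4$ and is the genuine geometric content of the theorem. The second step is an explicit elimination calculation whose closure depends crucially on the Vandermonde structure of the key subsystem — precisely what makes the argument succeed for $p\geq 5$, and tellingly what fails at $p=3$, where $F_3$ is an elliptic curve with many additional automorphisms beyond the naive $6\cdot 3^2=54$.
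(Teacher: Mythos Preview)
The paper does not prove this theorem; it is quoted from Tzermias and Leopoldt and invoked as a black box in the count of generic points on $D_p$. So there is no argument in the paper to compare your proposal against.

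That said, your outline is the standard approach and is essentially sound. One wrinkle in Step~1: the sentence ``since every automorphism of $F_p$ preserves the canonical class, this uniqueness forces it to also preserve $[\mathcal O_{F_p}(1)]$'' conflates two different arguments. Going through $\omega_{F_p}\cong\mathcal O(p-3)$ alone only yields $(p-3)(\phi^*H-H)\sim 0$ in $\operatorname{Pic}(F_p)$, which does not by itself give $\phi^*H\sim H$. The clean line is the one you already name: $\phi^*|\mathcal O(1)|$ is again a $g^2_p$, hence equals $|\mathcal O(1)|$ by the uniqueness theorem for smooth plane curves of degree $\geq 4$, so $\phi$ acts linearly on $H^0(\mathcal O(1))$ and lands in $PGL_3(K)$. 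In Step~2 your Vandermonde reduction is correct in spirit, but the ``short case analysis'' is where the actual work hides: you must treat separately the rows with $a_{r1}=0$, and among the remaining rows verify that no two ratios $t_r=a_{r2}/a_{r1}$ coincide before the rank argument applies. Once $A$ is monomial the rest is immediate. Note finally that the result (and your argument) holds for all degrees $N\geq 4$, not just primes; the paper only needs the prime case with $p\geq 5$.
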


In particular, $S_3$ acts by permuting the coordinates
in $\mathbb{P}^2(K)$, and $(\alpha,\beta)\in(\mu(p))^2$
maps $(x:y:z)$ to $(\alpha x:\beta y:z)$.  Clearly
each of these operations maps solutions of $X^p+Y^p+Z^p=0$
to other solutions; the content of the theorem is
that there are no other automorphisms.

\begin{prop}
\label{prop:solns}
The function field $\Q(D_p)$ of the Fermat curve
$D_p$ of prime exponent $p>3$ contains exactly eight solutions
(given below) to its defining equation $d_p(X,Y)=0$.
\end{prop}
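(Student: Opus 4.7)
My plan is to split the $K$-rational affine solutions of $d_p=0$ into the $\Q$-rational ones and the generic ones, count each, and exhibit them explicitly. Since $p>3$ is prime, $D_p$ has genus $(p-1)(p-2)/2 \geq 6$, so Proposition~\ref{P:gen2} and Remark~\ref{R:genpt} both apply: every $K$-rational point of $D_p$ is either $\Q$-rational or generic, and the generic ones are in bijection with $\operatorname{Aut}_\Q(D_p)$. Fermat's Last Theorem contributes the two affine $\Q$-rational solutions $(1,0)$ and $(0,1)$ (the remaining $\Q$-rational projective point $(1:-1:0)$ is at infinity and so does not count as a solution to the affine equation $d_p(X,Y)=0$).

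It therefore remains to show that $\operatorname{Aut}_\Q(D_p)$ has exactly six elements, namely a copy of $S_3$. Since $p$ is odd, the substitution $Z\mapsto -Z$ identifies the projective closure of $D_p$ with the symmetric Fermat curve $X^p+Y^p+Z^p=0$, to which Theorem~\ref{thm:Tzermias} applies and yields $\operatorname{Aut}_{\Qbar}(D_p) \cong S_3 \ltimes (\mu(p))^2$. Now $\operatorname{Aut}_\Q(D_p)$ is the fixed subgroup under the natural $\Gal{\Qbar}{\Q}$-action on $\operatorname{Aut}_{\Qbar}(D_p)$. The $S_3$ factor, being coordinate permutations, is pointwise $\Q$-defined; on the other hand, $\Q$ contains no nontrivial $p$-th root of unity, so no nontrivial element of $(\mu(p))^2$ is Galois-fixed. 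Hence $\operatorname{Aut}_\Q(D_p) = S_3$, with exactly six elements.

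Finally, I would trace each of the six coordinate permutations back through $Z\mapsto -Z$ and apply the result to the generic projective point $(x:y:1)$ of $D_p$. This yields the six generic affine solutions
$$(x,y),\quad (y,x),\quad (1/x,\,-y/x),\quad (-y/x,\,1/x),\quad (-x/y,\,1/y),\quad (1/y,\,-x/y),$$
each of which satisfies $X^p+Y^p=1$ by a short direct check that uses $p$ odd (e.g., $(1/x)^p+(-y/x)^p = (1-y^p)/x^p = 1$). Together with $(1,0)$ and $(0,1)$, these are the eight promised $K$-rational solutions to $d_p(X,Y)=0$; their distinctness is automatic from the bijection of Remark~\ref{R:genpt} plus the obvious fact that no generic point (with transcendental coordinates) coincides with a $\Q$-rational one.

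The main subtlety I anticipate is the descent step in paragraph two: justifying that the Galois-fixed subgroup of $\operatorname{Aut}_{\Qbar}(D_p)$ is precisely the $S_3$ factor. This depends on knowing the explicit form of the Tzermias--Leopoldt decomposition and on the fact that the cyclotomic Galois group $\Gal{\Q(\zeta_p)}{\Q}$ acts transitively on the primitive $p$-th roots of unity, so that the only Galois-fixed element of $(\mu(p))^2$ is the identity, while no nontrivial coset representative of the $S_3$ part gets twisted by the Galois action either.
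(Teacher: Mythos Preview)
Your proposal is correct and follows essentially the same route as the paper: split the $K$-rational affine points into the $\Q$-rational ones (two, by Fermat's Last Theorem, after discarding the point at infinity) and the generic ones (in bijection with $\operatorname{Aut}_\Q(D_p)$ by Remark~\ref{R:genpt}), then use Tzermias--Leopoldt together with the absence of nontrivial $p$-th roots of unity in $\Q$ to get $\operatorname{Aut}_\Q(D_p)\cong S_3$, and finally list the six generic solutions explicitly. The only cosmetic difference is that you frame the reduction from $\operatorname{Aut}_{\Qbar}(D_p)$ to $\operatorname{Aut}_\Q(D_p)$ as a Galois-descent computation, whereas the paper simply reads it off the explicit form of the automorphisms; since the Tzermias--Leopoldt automorphisms are given concretely as coordinate permutations composed with scalings by $p$-th roots of unity, both phrasings amount to the same one-line observation.
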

\begin{pf}
Since $\mathbb Q$ contains no non-trivial $p$-th roots of unity,
the automorphism group of $D_p$ is isomorphic to $S_3$,
and corresponds by Remark \ref{R:genpt} to the six generic
points on $D_p$. By Wiles' positive solution to Fermat's Last Theorem,
$D_p(\mathbb Q)$ consists of three points, including the point
at infinity $(1:-1:0)$.  Excluding the point at infinity gives a total of eight affine
solutions to the equation $d_p=0$. 
\qed\end{pf}

We now describe this proof more explicitly for readers
with less background in algebraic geometry.
The function field $\Q(D_p)$ must contain at least one
nontrivial solution $(x,y)$ to $X^p+Y^p=1$.  Now with $p>3$, $D_p$ has
genus $\geq 2$, so if $(x',y')\in(\Q(D_p))^2$
is any nontrivial solution, then
$x'$ and $y'$ generate $\Q(D_p)$, by Proposition \ref{P:gen2}
(as do $x$ and $y$ also).
Hence $x'$ is transcendental over $\Q$, forcing $\Q(x)\cong\Q(x')$,
which in turn yields a chain whose composition is an automorphism of $\Q(D_p)$:
$$\Q(x,y)\cong\Q(x)[Y]/(d_p(x,Y)) \cong \Q(x')[Y]/(d_p(x',Y)) \cong\Q(x',y').$$
Thus $(x':y':-1)$ gives an automorphism of the projective curve
$X^p+Y^p+Z^p$, using Remark \ref{R:genpt},
and so Theorem \ref{thm:Tzermias} shows that the only
projective solutions to $X^p+Y^p+Z^p=0$ in the function field are
$(\alpha x:\beta y:-1)$ and permutations thereof,
with $\alpha$ and $\beta$ ranging over the
$p$-th roots of unity.  But all those roots except $1$
lie outside of $\Q$, since $p$ is odd.
Therefore, any solution to $X^p+Y^p+Z^p=0$ must be of the form
$(x:y:-1)$ or a permutation of this, which translates to an affine solution
$(\lambda x,\lambda y,-\lambda)$ or a permutation.
For our purposes the affine solution must end with $-1$,
in order to yield $X^p+Y^p=1$.
Clearly $(x,y,-1)$ works, and choosing $\lambda=-\frac{1}{x}$
gives $(-1,-\frac{y}{x},\frac1{x})$, while choosing
$\lambda=-\frac1{y}$ gives $(-\frac{x}{y},-1,\frac1{y})$, which
both can be permuted under $S_3$ to the appropriate form.  However,
no other value for $\lambda$ can give the necessary coordinate $-1$.
\begin{cor}
\label{cor:sixsolns}
The six nontrivial solutions of $X^p+Y^p=1$ in $\Q(D_p)$
are $(x,y)$, $(-\frac{y}{x},\frac1{x})$, $(-\frac{x}{y},\frac1{y})$,
and the transpositions of these,
where $(x,y)$ is any one nontrivial solution.
\qed\end{cor}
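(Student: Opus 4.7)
The plan is to make explicit the calculation already sketched in the discussion following Proposition \ref{prop:solns}. By that discussion, any projective solution of $X^p+Y^p+Z^p=0$ in the function field $\Q(D_p)$ is an $S_3$-permutation of $(\alpha x:\beta y:-1)$ for some $p$-th roots of unity $\alpha,\beta$. My first step will be to observe that these roots of unity must both equal $1$: by Lemma \ref{L:ac}, $\Q$ is algebraically closed in $\Q(D_p)$, so any $p$-th root of unity lying in the function field already lies in $\Q$; since $p$ is odd, the only such root is $1$. Hence the projective solutions in $\Q(D_p)$ are exactly the six $S_3$-permutations of $(x:y:-1)$ (and these six are genuinely distinct because $x$ is transcendental over $\Q$, so in particular no two of $x$, $y$, $-1$ coincide in the function field).

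The second step is to dehomogenize each of these six projective points to obtain an affine solution to $X^p+Y^p=1$. For each permutation I pick the unique scalar $\lambda$ making the third coordinate equal $-1$. The identity and the swap of the first two coordinates contribute $(x,y)$ and $(y,x)$. The remaining four permutations move $-1$ out of the last slot, so I rescale by $-1/y$ or $-1/x$ to produce $(-x/y,\,1/y)$, $(1/y,\,-x/y)$, $(-y/x,\,1/x)$, and $(1/x,\,-y/x)$. These six affine solutions group naturally into the three transposition pairs asserted in the corollary, namely $(x,y)$ with $(y,x)$, then $(-y/x,\,1/x)$ with $(1/x,\,-y/x)$, and finally $(-x/y,\,1/y)$ with $(1/y,\,-x/y)$.

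I do not expect any real obstacle here: the Tzermias--Leopoldt theorem together with Lemma \ref{L:ac} pins down the full list of projective solutions, and what remains is pure bookkeeping of the six $S_3$-cosets and their scalar corrections. As a consistency check, the six nontrivial solutions above together with the two trivial $\Q$-rational affine solutions $(1,0)$ and $(0,1)$ (coming from the projective points $(1:0:-1)$ and $(0:1:-1)$, which lie over $\Q$ itself) recover the total count of eight given by Proposition \ref{prop:solns}.
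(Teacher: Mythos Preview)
Your proposal is correct and follows essentially the same approach as the paper: the corollary carries a \qed\ with no separate proof, since it is an immediate bookkeeping consequence of the explicit discussion preceding it, and your write-up reproduces exactly that discussion (Tzermias--Leopoldt to list the projective solutions as $S_3$-permutations of $(\alpha x:\beta y:-1)$, then eliminate nontrivial $\alpha,\beta$ via algebraic closedness of $\Q$ in $\Q(D_p)$, then dehomogenize). The only cosmetic difference is that the paper first scales $(\lambda x,\lambda y,-\lambda)$ to place $-1$ in some coordinate and then permutes, whereas you permute first and then rescale; the resulting six affine points are the same.
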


So in Theorem \ref{thm:Fermat}, when we built
the computable isomorphism $f$ from $F$ onto an
arbitrary computable copy $\Ftilde$, there were actually
six possible images for each $x_s$ in $\Ftilde$,
which we identified by finding nontrivial solutions to $d_{p_s}=0$.
We simply chose $f(x_s)$ to be the first one we recognized,
since Theorem \ref{T:noncovcat} shows that each of the
six is a valid choice.  For Theorem \ref{thm:intrinsicallycomputable},
however, we need a transcendence basis each element
of which has only one possible image in the target field.

\begin{pftitle}{Proof of Theorem \ref{thm:intrinsicallycomputable}}
Let $F$ be the computable field defined in Theorem \ref{thm:Fermat},
presented as
$$ F = \Q(x_0,x_1,\ldots)[y_0,y_1,\ldots]/
(d_{p_i}(x_i,y_i)~:~i\in\omega),$$
where $\la p_i\ra_{i\in\omega}$ was the computable
sequence of primes chosen there.  In the nice presentation $F$
constructed there, the transcendence basis $B=\set{x_i}{i\in\omega}$
was computable, of course, but not intrinsically computable,
as noted above.  However, we can build an intrinsically
computable basis $A$ from it.

The nontrivial solutions of each $d_{p_i}(X,Y)=0$ in $F$
were given by Proposition \ref{prop:solns}:
$\la x_i,y_i\ra$, $\la y_i,x_i\ra$, $\la \frac{1}{y_i},\frac{-x_i}{y_i}\ra$,
$\la \frac{-x_i}{y_i},\frac{1}{y_i}\ra$, $\la \frac{1}{x_i},\frac{-y_i}{x_i}\ra$,
$\la \frac{-y_i}{x_i},\frac{1}{x_i}\ra$, and no others.
For each $i$, let $z_i$ be the sum of these six numbers:
$$
z_i = x_i+y_i + \frac{1}{y_i}  - \frac{x_i}{y_i}+ \frac{1}{x_i} - \frac{y_i}{x_i}
= \left(\frac{1-x_i}{1-x_i^{p_i}}\right)y_i^{p_i-1} +
\left(\frac{x_i-1}{x_i}\right)y_i +\frac{x_i^2+1}{x_i},
$$
where we have used the algebraic relation $y_i^{p_i}=1-x_i^{p_i}$
to rewrite the expression as a polynomial in $y_i$ over $\Q(x_i)$.
Thus $z_i$ clearly lies in $\Q(x_i)[y_i]$ but not in $\Q$,
as $y_i$ is algebraic of degree $p_i$ over $\Q(x_i)$,
and so $z_i$ is algebraically interdependent with $x_i$ in $F$
over $\Q$.  So we may replace each $x_i$ by $z_i$ in the basis $B$,
giving a new transcendence basis $A=\set{z_i}{i\in\omega}$ for $F$.

Now each $z_i$ is defined by a simple existential formula $\psi_i(z)$:
it is the sum of six distinct elements $x\in F$, each satisfying
$$ x\neq 0~\&~x\neq 1~\&~(\exists y\in F) [x^{p_i}+y^{p_i}=1].$$
By Proposition \ref{prop:solns}, this formula $\psi_i(z)$ actually defines
$z_i$ in $F$.  So the transcendence basis $A$ is defined by
$$ (\exists i\in\omega) \psi_i(z).$$
(This definition is in fact a computable infinitary $\Sigma_1$ formula,
quantifying over $\omega$ as well as over $E$,
which is acceptable for us, though less common in model theory.)
Therefore, the image $f(A)$ under any isomorphism $f$
(not necessarily computable) from
$F$ onto any computable field $\Ftilde$ must also be
existentially defined in $\Ftilde$, hence c.e.
Now we invoke a simple lemma to show that $f(A)$
is computable in $\Ftilde$.

\begin{lemma}
\label{lemma:cebasis}
In a computable field $K$, if a transcendence basis
$A$ is computably enumerable, then $A$ is computable.
\end{lemma}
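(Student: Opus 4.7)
The plan is to show that $A$ is co-c.e.\ as well, which combined with its c.e.-ness yields decidability. Equivalently, I will exhibit a uniform semi-decision procedure for membership in the complement $K\setminus A$, to be dovetailed with the given c.e.\ enumeration of $A$: on input $x\in K$, the first thread simply enumerates $A$ and halts with the verdict ``$x\in A$'' as soon as $x$ is listed.

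The heart of the argument is the complementary thread. The key observation is that $x\in K\setminus A$ if and only if there exist finitely many $a_1,\dots,a_n\in A$, all different from $x$, and a nonzero polynomial $p(X_1,\dots,X_n,Y)\in\Q[X_1,\dots,X_n,Y]$ of positive degree in $Y$, such that $p(a_1,\dots,a_n,x)=0$ in $K$. Indeed, if $x\notin A$ then $x$ is algebraic over $\Q(A)$, hence over $\Q(a_1,\dots,a_n)$ for finitely many $a_i\in A\setminus\{x\}$, and clearing denominators yields such a $p$; conversely, any such witness with $x\in A$ would give an algebraic relation among the elements $\{x,a_1,\dots,a_n\}\subseteq A$, contradicting that $A$ is a transcendence basis.

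The second thread therefore dovetails over all $n$, all nonzero $p\in\Q[X_1,\dots,X_n,Y]$ of positive $Y$-degree, and all $n$-tuples $(a_1,\dots,a_n)$ drawn from the running enumeration of $A$ with each $a_i\neq x$, halting (with verdict ``$x\notin A$'') as soon as it verifies $p(a_1,\dots,a_n,x)=0$ in $K$. Each such verification is decidable in the computable field $K$. By the equivalence above, exactly one of the two threads halts on each $x$, which supplies the required decision procedure.

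The main subtlety to check is that a positive outcome of the second thread really certifies non-membership, i.e.\ that a nonzero $p$ does not degenerate after substituting $(a_1,\dots,a_n)$. This is where the algebraic independence of $A$ is crucial: regarding $p$ as a polynomial in $Y$ with coefficients in $\Q[X_1,\dots,X_n]$, none of those coefficient polynomials can vanish at $(a_1,\dots,a_n)$ unless it is identically zero, so a nonzero $p$ of positive $Y$-degree stays a nonzero polynomial in $Y$ over $\Q(a_1,\dots,a_n)$, genuinely witnessing algebraicity of $x$ over this subfield and hence $x\notin A$.
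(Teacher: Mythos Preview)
Your argument is correct in spirit and uses the same underlying idea as the paper: every element of $K$ is algebraic over some finite subset of $A$, and once such a subset is found, membership in $A$ reduces to a finite check. There is one small gap, however: the claim in your last paragraph that ``none of those coefficient polynomials can vanish at $(a_1,\ldots,a_n)$ unless it is identically zero'' requires the $a_i$ to be \emph{distinct} elements of $A$, and your description of the second thread does not impose this. Without it, a witness like $p=(X_1-X_2)Y$ with $a_1=a_2\in A\setminus\{x\}$ would make the second thread halt on \emph{every} $x$, including those in $A$. The fix is immediate: restrict the search to tuples of pairwise distinct $a_i$.

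The paper's proof is organized a bit more economically. Rather than dovetailing two semi-decision procedures, it runs a \emph{single} search that terminates on every input: given $t\in K$, enumerate $A=\{a_0,a_1,\ldots\}$ while searching for any nonzero $p(T)\in Q(a_0,\ldots,a_n)[T]$ with $p(t)=0$ (here $t$ is allowed to coincide with some $a_j$). This search always halts, since every $t$ is algebraic over $Q(A)$; once it does, one simply checks whether $t\in\{a_0,\ldots,a_n\}$, which decides $t\in A$. This avoids the two-thread bookkeeping and the distinctness caveat, but the mathematical content is the same as yours.
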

\begin{pf}
$K$ is algebraic over the purely transcendental extension
$Q(A)$, where $Q$ is the prime subfield of $K$.
Given any $t\in K$, we find an $n\in\omega$ and a nonzero
polynomial $p(T)\in Q(a_0,\ldots,a_n)[T]$ with $p(t)=0$,
by enumerating $A=\{ a_0,a_1,\ldots\}$ and searching
though such polynomials (for all $n$ simultaneously).
But then $t$ is algebraic over $Q(a_0,\ldots,a_n)$,
and so $t\in A$ if and only if $t\in\{ a_0,\ldots,a_n\}$.
\qed\end{pf}
This completes the proof of Theorem \ref{thm:intrinsicallycomputable}.
\qed\end{pftitle}

We remark that Proposition \ref{prop:solns} was essential
to this proof of Theorem \ref{thm:intrinsicallycomputable}.
In a field as described more generally in Theorem \ref{thm:general},
one might not be able to compute the number of solutions
to $C_i$ in its function field, and therefore it could be impossible
to state the definitions $\psi_i(z)$ uniformly in $i$.

We also remark that this same proof shows the basis $A$
to be \emph{relatively intrinsically computable}:
its image $f(A)$ under any isomorphism $f$
is computable relative to the Turing degree of
the field $f(F)$.  Likewise, this field $F$, and also
those described by Theorem \ref{thm:general},
are \emph{relatively computably categorical}:
if $\Ftilde$ is a field isomorphic to $F$
but of arbitrary Turing degree, then there is an
isomorphism between these fields which is computable
in the degree of $\Ftilde$.

\section{Questions}
\label{sec:questions}

Several basic questions arise from the algebraic geometry
presented here.  For instance, we do not know whether cover
relations exist among any of the Fermat curves $D_p$ and $D_q$
with $3 < p < q$.  $D_3$ is a special case, of course,
since its genus equals $1$, and in Theorem \ref{thm:Fermatcurves}
we chose specific primes such that
no cover relations exist among those curves.
But the general case remains unclear.

Also, while Theorems \ref{thm:Fermat}
and \ref{thm:general} give examples of
computably categorical fields of infinite transcendence
degree, they leave open the larger question
of determining a structural criterion for
computable categoricity of fields.  If anything, they
make this question appear more difficult, since now
we know that infinite transcendence degree
is not sufficient to rule out computable categoricity.

Specifically, we remarked in Lemma \ref{L:genusnoncov}
that there is no cover relation among any collection
of curves of any single fixed genus $\geq 2$.  Whether
this gives rise to other computably categorical fields
of infinite transcendence degree depends on whether one
can produce a computable collection of such curves with
the effective Mordell property.  It seems plausible that
this can be done, but the proof remains elusive,
and introduces the larger question of computing
the number of solutions in $\Q$ for an arbitrary collection of curves
with the (not necessarily effective) Mordell property.
Of course, the exact number of $\Q$-rational points
on a curve given by a polynomial $q\in\Q[X,Y]$
defines a limitwise monotonic function
$\lim_s g(q,s)$ (that is, with $g$ computable
and with $g(q,s)\leq g(q,s+1)$ for all $q$ and $s$),
and when we restrict to curves of genus $\geq 2$,
this function is total, i.e., the limit is always finite.
However, it remains unknown whether the exact number
is computable from $q$ or not.

We regard this as a natural and challenging question.
It seems related to Hilbert's Tenth Problem for $\Q$,
which demands a decision procedure for the existence
of $\Q$-rational solutions to polynomials in arbitrarily
many variables over $\Q$.  In particular,
an oracle for Hilbert's Tenth Problem for $\Q$
would allow one to determine the exact number of rational
solutions to any curve of genus $\geq 2$:
one can express the question ``does this curve have a solution
distinct from the finitely many I have already found'' in a uniform way
such that the oracle could answer that question.
This would not necessarily work for curves of
genus $<2$, or for varieties in general,
which could contain infinitely many rational points.
(To decide which varieties have the Mordell property appears
to require the jump of the oracle set for Hilbert's Tenth Problem for $\Q$.)
So, even if the effective Mordell property holds for all curves
of genus $\geq 2$, we would still not have an obvious
algorithm for Hilbert's Tenth Problem for polynomials
defining varieties of higher dimension.

\parbox{4.7in}{
{\sc
\noindent
Department of Mathematics \hfill \\
\hspace*{.1in}  Queens College -- C.U.N.Y. \hfill \\
\hspace*{.2in}  65-30 Kissena Blvd. \hfill \\
\hspace*{.3in}  Flushing, New York  11367 U.S.A. \hfill \\
Ph.D. Programs in Mathematics \& Computer Science \hfill \\
\hspace*{.1in}  C.U.N.Y.\ Graduate Center\hfill \\
\hspace*{.2in}  365 Fifth Avenue \hfill \\
\hspace*{.3in}  New York, New York  10016 U.S.A. \hfill}\\
\medskip
\hspace*{.045in} {\it E-mail: }
\texttt{Russell.Miller\at {qc.cuny.edu} }\hfill \\
}

\parbox{4.7in}{
{\sc
\noindent
Department of Mathematics \hfill \\
\hspace*{.1in}  New York City College of Technology \hfill \\
\hspace*{.2in}  300 Jay Street \hfill \\
\hspace*{.3in}  Brooklyn, New York  11201 U.S.A. \hfill \\
Ph.D. Program in Mathematics \hfill \\
\hspace*{.1in}  C.U.N.Y.\ Graduate Center\hfill \\
\hspace*{.2in}  365 Fifth Avenue \hfill \\
\hspace*{.3in}  New York, New York  10016 U.S.A. \hfill}\\
\medskip
\hspace*{.045in} {\it E-mail: }
\texttt{hschoutens\at {citytech.cuny.edu} }\hfill \\
}

\end{document}